\theoremstyle{plain}
\newtheorem{teo}{Theorem}[section]
\newtheorem{lemma}[teo]{Lemma}
\newtheorem{cor}[teo]{Corollary}
\newtheorem{ackn}{Acknowledgments\!}
\theoremstyle{definition}
\theoremstyle{remark}
\newtheorem{rem}[teo]{Remark}
\def\Ric{{\mathrm {Ric}}}
\def\ricc{{\mathrm {Ric}}}
\def\dRic{\overset{\circ}{\mathrm{Ric}}}
\def\a{\alpha}
\numberwithin{equation}{section}
\def\SS{{{\mathbb S}}}
\def\RR{{\mathbb R}}
\def\Ric{{\mathrm {Ric}}}
\def\a{{\alpha}}
\def\eps{\varepsilon}
\newcommand{\set}[1]{{\left\{#1\right\}}}               
\newcommand{\pa}[1]{{\left(#1\right)}}                  
\newcommand{\abs}[1]{{\left|#1\right|}}                 
\newcommand{\pair}[1]{\left\langle#1\right\rangle}      
\title[Rigidity of Critical Metrics for Quadratic Curvature Functionals]{Rigidity of Critical Metrics for \\ Quadratic Curvature Functionals}
\date{\today}
\author[Giovanni Catino]{Giovanni Catino}
\address[Giovanni Catino]{Dipartimento di Matematica, Politecnico di Milano, Piazza Leonardo da Vinci 32, 20133 Milano, Italy}
\email[]{giovanni.catino@polimi.it}
\author[P. Mastrolia]{Paolo Mastrolia}
\address[Paolo Mastrolia]{Dipartimento di Matematica, Universit\`{a} degli Studi di Milano, Via Saldini 50, 20133 Italy.}
\email[]{paolo.mastrolia@unimi.it}
\author[D. D. Monticelli]{Dario D. Monticelli}
\address[Dario Monticelli]{Dipartimento di Matematica, Politecnico di Milano, Piazza Leonardo da Vinci 32, 20133 Milano, Italy}
\email[]{dario.monticelli@polimi.it}
\begin{document}

\begin{abstract}
In this paper we prove new rigidity results for complete, possibly non-compact, critical metrics of the quadratic curvature functionals $\mathfrak{F}^{2}_t = \int |\operatorname{Ric}_g|^{2} dV_g +  t \int R^{2}_g  dV_g$, $t\in\mathbb{R}$, and  $\mathfrak{S}^2 = \int R_g^{2} dV_g$. We show that (i) flat surfaces are the only critical points of $\mathfrak{S}^2$, (ii) flat three-dimensional manifolds are the only critical points of $\mathfrak{F}^{2}_t$ for every $t>-\frac{1}{3}$, (iii) three-dimensional scalar flat manifolds are  the only critical points of $\mathfrak{S}^2$ with finite energy  and (iv) $n$-dimensional, $n>4$, scalar flat manifolds are the only critical points of $\mathfrak{S}^2$ with finite energy and scalar curvature bounded below. In case (i), our proof relies on rigidity results for conformal vector fields and an ODE argument; in case (ii) we draw upon some ideas of M. T. Anderson concerning regularity, convergence and rigidity of critical metrics; in cases (iii) and (iv) the proofs are self-contained and depend on new pointwise and integral estimates.
\end{abstract}

\maketitle

\begin{center}

\noindent{\it Key Words: Quadratic functionals, critical metrics, rigidity results}

\medskip

\centerline{\bf AMS subject classification: 53C21, 53C24, 53C25}

\end{center}

\


\section{Introduction}
It is a natural problem in Riemannian geometry to study canonical metrics arising as solutions of the Euler-Lagrange equations for curvature functionals.
In~\cite{berger3}, Berger commenced the study of Riemannian functionals which are quadratic in the curvature (see~\cite[Chapter 4]{besse} and~\cite{smot}, \cite{CMbook} for surveys). To fix the notation, let $M^{n}$, $n\geq 2$, be a $n$--dimensional smooth  manifold without boundary. Given a Riemannian metric $g$ on $M^n$, we denote with $\operatorname{Riem_g}$, $W_g$, $\ricc_g$ and $R_g$, respectively, the Riemann curvature tensor, the Weyl tensor, the Ricci tensor and the scalar curvature. Then a basis for the space of quadratic curvature functionals, defined on the space of smooth metrics on $M^n$, is given by
$$
\mathfrak{W}^2 = \int |W_g|^{2} dV_g\,, \qquad \mathfrak{r}^2 = \int |\ricc_g|^{2} dV_g\,, \qquad\, \mathfrak{S}^2 = \int R_g^{2} dV_g.
$$
It is well known that the only quadratic functional in the case $n=2$ is given by $\mathfrak{S}^2$, while in dimension $n=3$ one only has $\mathfrak{S}^2$ and $\mathfrak{r}^2$.
From the standard decomposition of the Riemann tensor, for every $n\geq4$, one has
$$
\mathfrak{R}^2 = \int |\operatorname{Riem_g}|^{2} dV_g = \int \left(|W_g|^{2}+\frac{4}{n-2}|\ricc_g|^{2}-\frac{2}{(n-1)(n-2)}R_g^{2}\right) dV_g \,.
$$
All such functionals, which also arise naturally as total actions in certain gravitational fields theories in physics, have been deeply studied in the last years by many authors; they are also of great interest in the K\"ahler framework: in this case, restricting to metrics in a given cohomology class, it is possible to show that each one of the three basis functionals can be expressed affinely by any one of them (see the discussion in \cite[Chapter 11.E.]{besse}).

In this paper we focus our attention on rigidity results for critical metrics of quadratic curvature functionals which do not depend on $\mathfrak{W}^2$, i.e. we will consider
$$
\mathfrak{F}^{2}_t = \int |\ricc_g|^{2} dV_g +  t \int R^{2}_g  dV_g \,,
$$
defined for some $t\in \RR\cup\{+\infty\}$, the case  $t = +\infty$ formally corresponding to the functional $\mathfrak{S}^2$.

Using variations with compact support, one finds that the Euler--Lagrange equation for a critical metric of $\mathfrak{F}^{2}_t$ reads, in local coordinates, as
\begin{equation}\label{eq-var-ft}
-\Delta R_{ij} +(1+2t)\nabla^{2}_{ij} R_g - \frac{1+4t}{2}(\Delta R_g)g_{ij} + \frac{1}{2} \Big( |\ricc_g|^{2}+ t R_g^{2}\Big) g_{ij} - 2 R_{ikjl}R_{kl} -2t R_g R_{ij}=0\,,
\end{equation}
which upon tracing yields
\begin{equation}\label{eq-var-ft-tr}
 \Big(n+4(n-1)t\Big)\Delta R_g \,=\,(n-4)\Big(|\ricc_g|^{2}+ t R_g^{2}\Big) \,.
\end{equation}
In particular for $\mathfrak{S}^2$ we obtain
\begin{equation}\label{eqbis}
R_g \,\ricc_g - \nabla^{2} R_g \,=\, \frac{3}{4(n-1)} R_g^{2}\,g \,,
\end{equation}
\begin{equation}\label{eq1bis}
\Delta R_g \,=\, \frac{n-4}{4(n-1)} R_g^{2} \,,
\end{equation}
It is clear that for $n=2$ and $n=3$ one expects stronger results, since the systems of equations \eqref{eq-var-ft} and \eqref{eqbis} govern the full curvature of the manifold. On the other hand, in dimensions $n\geq4$ one would surmise that information on the full Riemann tensor might be needed, at least to study system \eqref{eq-var-ft}.

\

Since in dimensions different from four $\mathfrak{F}^{2}_t$ is not scale-invariant, when $M^n$ is compact it is natural to restrict the functional on $\mathcal{M}_1(M^{n})$, the space of equivalence classes of Riemannian metrics on $M^n$ having unit volume. Equivalently, one can consider a modified functional properly normalized with the volume of the manifold. It was already observed in~\cite{besse} that every Einstein metric is critical for $\mathfrak{F}^{2}_t$ on $\mathcal{M}_1(M^{n})$, for every $t\in\RR$. The converse in general is false, of course. For instance, in dimension four, every Bach-flat metric is critical for $\mathfrak{F}^2_{-1/3}$ and every Weyl and scalar flat metric is critical for $\mathfrak{F}^2_{-1/4}$ on $\mathcal{M}_1(M^{4})$ (see~\cite[Chapter 4]{besse}). Moreover, Lamontagne in ~\cite{lamontagne1} constructed a homogeneous non-Einstein critical metric for $\mathfrak{R}=4\mathfrak{F}^2_{-1/4}$ on $\mathcal{M}_1(\SS^{3})$. This construction can be generalized to the case when $t>-\frac{1}{2}$, see \cite[section 7]{gurviastab}. Other examples of compact critical metrics were constructed by Gursky and Viaclovsky in \cite{gurviacon}, where they considered a manifold $M^4$ which is the connected sum of some Einstein manifolds and constructed critical metrics for $\mathfrak{F}^2_t$ on $\mathcal{M}^1(M^4)$, for $t$ ``close'' to a given value depending on the topology of the Einstein building blocks. It is then natural to ask under which conditions a critical metric for $\mathfrak{F}^{2}_t$ must be Einstein. Typically, one assumes some curvature conditions (of pointwise or integral type, positivity or negativity of the curvature, etc.) on the critical metric, in order to prove rigidity properties; for results in this direction see e.g. \cite{and1, gurvia3, cat1, lamontagne2, CatinoQuadratic, garciahomo}.


\

The problem of studying critical metrics for quadratic curvature functionals in the non-compact setting has on the other hand received much less attention. It is easy to see that all Ricci-flat metrics are critical points of the functional $\mathfrak{F}^2_t$ when $t\in\RR$, which are also global minima of the functional when $ t\in[-\frac{1}{n},+\infty)$; similarly, scalar-flat metrics are global minima for $\mathfrak{S}^2$, and hence are critical.

Anderson in~\cite{and2} proved that every complete three-dimensional critical metric for the functional $\mathfrak{F}^2_t$ with non-negative scalar curvature is flat, if $t=0$ or $t=-\frac{1}{3}$. As an auxiliary result, in Lemma \ref{l-rig} we observe that the same techniques allow to extend the result to include the case $t\in(-\frac{1}{3},+\infty)$. In~\cite{cat1} the first author showed a characterization of complete critical metrics for $\mathfrak{S}^2$ with non-negative scalar curvature, in any dimension $n\geq 3$. On the other hand, in~\cite{catmasmonvar}, we showed that, in dimension three, flat metrics are the only complete metrics with non-negative scalar curvature which are critical for the $\sigma_{2}$-curvature functional $\mathfrak{F}^2_{-3/8}$. All these results rely heavily on the sign condition on the scalar curvature. We observe that, to the best of our knowledge, the only example of a smooth critical metric for a functional $\mathfrak{F}^2_{t}$ was constructed by Gursky and Viaclovsky in \cite{gurvia3} when $n=3$ and $t=-\frac{3}{8}$; this example has strictly negative scalar curvature, and we note for later reference that it does not belong to any $L^q$ space.

\

We now proceed to describe our results. As already observed, when $n=2$ the only quadratic curvature functional is $\mathfrak{S}^2$; we then show that the only critical metrics for such functional are flat, without any further assumption. Indeed we have
\begin{teo}\label{t-2d} Let $(M^{2},g)$ be a two-dimensional complete critical metric of $\mathfrak{S}^{2}$. Then $(M^{2},g)$ is flat, and thus a global minimum of the functional.
\end{teo}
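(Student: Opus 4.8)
The plan is to first reduce the critical metric equations to a single scalar identity special to dimension two, and then exploit the rigidity it imposes on $\nabla R_g$. In dimension two the Ricci tensor is pure trace, $\ricc_g=\tfrac12 R_g\, g$, so substituting $n=2$ and $R_{ij}=\tfrac12 R_g\, g_{ij}$ into \eqref{eqbis} (equivalently, combining \eqref{eqbis} with the traced equation \eqref{eq1bis}) collapses the system to
\begin{equation*}
\nabla^2 R_g = -\frac{1}{4}R_g^{2}\, g.
\end{equation*}
Thus $X:=\nabla R_g$ is a closed conformal (concircular) vector field, $\nabla_i X_j=\phi\, g_{ij}$ with $\phi=-\tfrac14 R_g^2$, which is exactly the structure to which the rigidity theory for conformal vector fields applies.

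Next I would split into two cases. If $R_g$ is constant, the displayed identity forces $-\tfrac14 R_g^2 g=0$, hence $R_g\equiv 0$ and $(M^2,g)$ is flat. So assume $R_g$ is non-constant. A direct computation shows that $F:=|\nabla R_g|^2+\tfrac16 R_g^3$ is a first integral: using the Hessian identity, $\nabla_i F=2\nabla^2_{ij}R_g\,\nabla^j R_g+\tfrac12 R_g^2\nabla_i R_g=-\tfrac12 R_g^2\nabla_i R_g+\tfrac12 R_g^2\nabla_i R_g=0$. Hence $|\nabla R_g|^2=C-\tfrac16 R_g^3$ for a global constant $C$, and in particular $R_g\le (6C)^{1/3}$ everywhere.

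The final step is the ODE/completeness argument. A short computation using the Hessian identity gives $\nabla_\nu\nu=0$ for $\nu:=\nabla R_g/|\nabla R_g|$ on the open set $\{\nabla R_g\neq 0\}$, so $\nu$ is a geodesic field and along any unit-speed integral curve $\gamma$ the value $\rho(s):=R_g(\gamma(s))$ solves the autonomous equation $\rho''=-\tfrac14\rho^2$. Starting at a point with $\nabla R_g\neq 0$ and flowing in the $-\nu$ direction, $\rho$ is strictly decreasing with $\rho''\le 0$, so $|\nabla R_g|=|\rho'|$ stays positive and $\gamma$ remains a genuine gradient line for all forward parameter. The first integral gives $(\rho')^2=C-\tfrac16\rho^3$, and since $\int^{-\infty}\bigl(C-\tfrac16\rho^3\bigr)^{-1/2}\,d\rho<\infty$ the solution reaches $\rho=-\infty$ in finite arclength. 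As $(M^2,g)$ is complete, $\gamma$ is defined for all parameter values and $\rho=R_g\circ\gamma$ must remain finite, a contradiction. Therefore $R_g$ is constant and, by the previous case, $(M^2,g)$ is flat. Equivalently, one may feed the concircular field $\nabla R_g$ into the Tashiro-type classification of complete manifolds carrying such fields and verify that no non-flat warped-product model is compatible with $\phi=-\tfrac14 R_g^2$ and completeness; the equation $\rho''=-\tfrac14\rho^2$ is precisely the warping equation.

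I expect the main obstacle to be the rigorous passage from the pointwise identity to the global conclusion. One must ensure the gradient geodesic does not meet a critical point of $R_g$ before blow-up — here automatic, since the Hessian $-\tfrac14 R_g^2 g$ is negative definite wherever $R_g\neq 0$, so every critical point is a strict maximum and cannot be reached while flowing downhill — and one must invoke completeness correctly to turn finite-arclength blow-up of the ODE into a contradiction. Packaging this cleanly, either through the explicit geodesic computation or by quoting the classification of complete manifolds with concircular fields, is the crux of the argument.
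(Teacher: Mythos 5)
Your proposal is correct, and it rests on the same core identities as the paper: the reduction to $\nabla^2 R_g=-\tfrac14 R_g^2\,g$, the resulting ODE $\rho''=-\tfrac14\rho^2$, and the conserved quantity $(\rho')^2+\tfrac16\rho^3$, whose cubic growth forces blow-up of $\rho$ to $-\infty$ in finite arclength. Where you genuinely diverge is in how you globalize. The paper works near a regular level set of $R_g$ (via Sard), writes the metric as a warped product $dr\otimes dr+f'(r)^2 g^{\Sigma}$ on a maximal tubular neighborhood, classifies all nontrivial solutions of the ODE into three families by a phase-plane analysis, and contradicts completeness by comparing the finite maximal interval of each family with the required unboundedness of the neighborhood (in one or both directions, depending on whether $f$ has a critical point). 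You instead follow a single gradient trajectory: the computation $\nabla_\nu\nu=0$ for $\nu=\nabla R_g/|\nabla R_g|$ (which checks out, using $\nabla^2R_g(\nu,\nabla R_g)=-\tfrac14R_g^2|\nabla R_g|$) makes the flow line a unit-speed geodesic, the concavity $\rho''\le 0$ with $\rho'(0)<0$ keeps it away from critical points, and the finite-arclength divergence $\int^{-\infty}(C-\tfrac16\rho^3)^{-1/2}\,d\rho<\infty$ contradicts completeness directly. This buys you two things: you avoid the level-set/warped-product machinery entirely, and you treat the compact and non-compact cases uniformly (constant $R_g$ forces $R_g\equiv 0$ pointwise from the Hessian equation, so no separate integration over a compact manifold is needed). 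The paper's route, in exchange, gives a complete qualitative picture of all local solutions, which is closer in spirit to the conformal-soliton classification literature it cites. I see no gap in your argument; the only point worth writing out carefully is the open-and-closed continuation argument showing the geodesic stays in $\{\nabla R_g\neq 0\}$ up to the blow-up parameter, which your bound $|\rho'(s)|\ge|\rho'(0)|>0$ already supplies.
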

The proof of this theorem relies on known results on gradient conformal solitons and on an ODE argument.

When $n=3$, we also prove the same result for critical metrics of the functional $\mathfrak{F}^{2}_t$, with $t\in(-\frac{1}{3},+\infty)$; hence we have
\begin{teo}\label{t-3dFt}
Let $(M^{3},g)$, be a three-dimensional complete critical metric of $\mathfrak{F}^{2}_t$ with
$$-\frac13 < t <+\infty.$$
Then $(M^{3},g)$ is flat, and thus a global minimum of the functional.
\end{teo}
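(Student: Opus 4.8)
The plan is to follow Anderson's strategy, which the authors signal in the introduction by referencing Lemma~\ref{l-rig} and the techniques of~\cite{and2}. In dimension three the Weyl tensor vanishes identically, so the full Riemann tensor is determined algebraically by the Ricci tensor via the decomposition
\begin{equation*}
R_{ikjl} = R_{ij}g_{kl}+R_{kl}g_{ij}-R_{il}g_{kj}-R_{kj}g_{il}-\tfrac{1}{2}R_g\big(g_{ij}g_{kl}-g_{il}g_{kj}\big).
\end{equation*}
Substituting this into the Euler--Lagrange system~\eqref{eq-var-ft} turns it into a closed second-order elliptic system in the Ricci tensor and scalar curvature alone; this is precisely what the introduction means when it says that in low dimensions the systems ``govern the full curvature.'' First I would insert this expression into~\eqref{eq-var-ft} and simplify the curvature term $R_{ikjl}R_{kl}$ to re-express the equation purely in terms of $\ricc_g$, $R_g$ and their covariant derivatives.

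Next I would bring in the regularity and $\varepsilon$-regularity machinery of Anderson. Critical metrics for $\mathfrak{F}^2_t$ satisfy an elliptic system, and after fixing harmonic coordinates one obtains local a~priori estimates; combined with a Moser iteration or blow-up argument, these yield an $\varepsilon$-regularity statement bounding the pointwise curvature in terms of its local $L^2$ energy wherever that energy is small. I expect the sign condition $t>-\tfrac13$ to enter precisely here: the traced equation~\eqref{eq-var-ft-tr}, which in $n=3$ reads $\big(3+8t\big)\Delta R_g = -\big(|\ricc_g|^2+tR_g^2\big)$, has a good sign on the coefficient $3+8t$ of the Laplacian exactly when $t>-\tfrac38$, and more generally the range $t>-\tfrac13$ is what makes the relevant quadratic form in the curvature estimates definite, allowing the maximum principle and the energy estimates to run. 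The heart of the argument is then a rigidity/gap step: using~\eqref{eq-var-ft-tr} together with a Bochner-type formula for $|\ricc_g|^2$ and the algebraic identities special to three dimensions, one shows that the only complete solution is the flat one.

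The main obstacle, as in Anderson's work, is the passage from local estimates to a global conclusion on a possibly non-compact and a~priori non-collapsed manifold. One must control the geometry at infinity without any assumed bound on volume growth or on the sign of $R_g$, which is exactly the difficulty that earlier results circumvented by assuming $R_g\geq 0$. The strategy to overcome this is a blow-down/convergence argument: rescaling the metric and extracting, via the $\varepsilon$-regularity and the elliptic estimates, a limit which is a flat critical metric, and then propagating this flatness back to the original manifold through a unique-continuation or monotonicity argument. I anticipate the technical crux will be verifying that the constant $3+8t$ (respectively $3+12t$ in the coefficient of $\Delta R_g$ appearing in~\eqref{eq-var-ft-tr}) keeps the correct sign throughout the range $-\tfrac13<t<+\infty$, so that both the convergence theory and the final integral estimate close; establishing this sign control uniformly, rather than only at the distinguished values $t=0$ and $t=-\tfrac13$ treated in~\cite{and2}, is the genuinely new ingredient needed to extend the rigidity to the whole interval.
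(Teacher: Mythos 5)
Your scaffolding matches the paper's in broad outline: the three-dimensional Weyl-free decomposition turning \eqref{eq-var-ft} into a closed system in $\ricc$ and $R$, Anderson-style local regularity of weak $L^{2,2}$ solutions, a blow-up producing a complete non-flat critical metric with uniformly bounded curvature, and a rigidity statement under $R\geq 0$ extended to the whole range $t>-\tfrac13$. However, the step that actually closes the argument is missing, and the substitute you propose would not work. The paper does \emph{not} blow down to a flat limit and then ``propagate flatness back'' by unique continuation or monotonicity; there is no such step, and unique continuation from a limit at infinity is not available here. The actual closing argument is a direct contradiction on the blown-up manifold itself: assuming $(M^3,g)$ is non-flat, Lemma \ref{l-blowup} yields a complete non-flat critical metric $(\bar M^3,\bar g)$ with $|\ricc_{\bar g}|_{\bar g}\leq 1$; since the Ricci curvature is then bounded below, the classical Omori--Yau maximum principle applies to $R_{\bar g}$, which by the traced equation \eqref{s2} satisfies
\begin{equation*}
(3+8t)\,\Delta_{\bar g}R_{\bar g}=-\Big(|\dRic_{\bar g}|^2+\tfrac{1+3t}{3}R_{\bar g}^2\Big)\leq -\tfrac{1+3t}{3}R_{\bar g}^2,
\end{equation*}
forcing $\inf_{\bar M}R_{\bar g}\geq 0$; then the rigidity result (Lemma \ref{l-rig}, the extension of Anderson's Theorem 0.1 to $-\tfrac13<t<+\infty$) forces $(\bar M^3,\bar g)$ to be flat, contradicting non-flatness. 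Note that the precise role of $t>-\tfrac13$ is the identity $|\ricc|^2+tR^2=|\dRic|^2+\tfrac{1+3t}{3}R^2$, whose $R^2$-coefficient must be positive both for the Omori--Yau step above and for the integral estimates inside the rigidity lemma; this is sharper than your ``definiteness of a quadratic form'' heuristic.

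Two smaller corrections: the coefficient of $\Delta R$ in the traced equation for $n=3$ is $3+8t$ (from $n+4(n-1)t$), not $3+12t$; and the value $t=-\tfrac38$ where the regularity argument breaks down satisfies $-\tfrac38<-\tfrac13$, so it lies outside the theorem's range and regularity is automatic for every $t$ under consideration.
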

For other values of $t$, in general this result may not hold, as the explicit example constructed by Gursky and Viaclovsky in \cite{gurvia3} shows for $t=-\frac{3}{8}$. The proof of the theorem follows essentially using arguments introduced by Anderson in \cite{and1, and2}. Notice that the result in Theorem \ref{t-3dFt} holds also for $t=-\frac{1}{3}$ under the additional assumption that the scalar curvature is non-negative, as it was shown by Anderson in \cite{and2}.

We explicitly note that the rigidity results in Theorems \ref{t-2d} and \ref{t-3dFt} hold without any extra assumption on the manifold and its curvature.

For the case of the functional $\mathfrak{S}^{2}$ when $n=3$, which formally corresponds to $t=+\infty$, we need an integrability assumption on $R_g$. Indeed Anderson's techniques, which we used in the proof of Theorem \ref{t-3dFt}, cannot be directly applied to this functional, as detailed in Remark \ref{rem-not-And}.
\begin{teo}\label{t-3d}
Let $(M^{3},g)$, be a three-dimensional complete critical metric of $\mathfrak{S}^{2}$ with finite energy, i.e. $R_g\in L^2(M^3)$. Then $(M^{3},g)$ is scalar flat, and thus a global minimum of the functional.
\end{teo}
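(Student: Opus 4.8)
The plan is to work entirely with the scalar curvature $R=R_g$, exploiting that for $\mathfrak{S}^2$ the Euler--Lagrange equation \eqref{eq1bis} reads, in dimension three,
\begin{equation*}
\Delta R = -\tfrac{1}{8}\,R^{2}\,\le 0,
\end{equation*}
so that $R$ is superharmonic. The whole argument is a two--step application of Caccioppoli--type integral estimates. I would use the completeness of $(M^3,g)$ only to build cutoff functions $\phi=\phi_r$ with $\phi\equiv 1$ on the geodesic ball $B_r$, $\operatorname{supp}\phi\subset B_{2r}$ and $|\nabla\phi|\le C/r$, together with the finite--energy hypothesis $\int_M R^2\,dV<\infty$. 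First I would prove that $R\ge 0$, and then that $R\equiv 0$.

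\emph{Step 1 (nonnegativity).} Since $-R$ is subharmonic, the function $R_-:=\max\{-R,0\}$ is a nonnegative, weakly subharmonic, $L^2$ function on $M$. Testing the subharmonicity inequality against $R_-\phi^2\ge 0$, integrating by parts and applying Young's inequality, I expect to obtain
\begin{equation*}
\tfrac12\int_M \phi^2|\nabla R_-|^2\,dV \le 2\int_M R_-^{2}\,|\nabla\phi|^2\,dV \le \frac{C}{r^2}\int_{M}R_-^{2}\,dV .
\end{equation*}
Letting $r\to\infty$ and using $R_-\in L^2$ forces $\nabla R_-\equiv 0$, so $R_-$ is constant on the (connected) manifold $M$. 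If this constant were positive, then $R$ would be a negative constant, and \eqref{eq1bis} would give $0=\Delta R=-\tfrac18 R^2<0$, a contradiction; hence $R_-\equiv 0$, that is $R\ge 0$.

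\emph{Step 2 (vanishing).} With $R\ge 0$ in hand, I would test \eqref{eq1bis} directly against $R\phi^2\ge 0$. Integrating by parts and using $2R\phi|\nabla R||\nabla\phi|\le \tfrac12\phi^2|\nabla R|^2+2R^2|\nabla\phi|^2$ yields
\begin{equation*}
\tfrac12\int_M \phi^2|\nabla R|^2\,dV + \tfrac18\int_M R^{3}\phi^2\,dV \le 2\int_M R^{2}\,|\nabla\phi|^2\,dV \le \frac{C}{r^2}\int_M R^2\,dV .
\end{equation*}
The crucial point is that, thanks to $R\ge 0$, both terms on the left--hand side are nonnegative. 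Letting $r\to\infty$ and invoking $R\in L^2$ makes the right--hand side vanish, so $\int_M R^3\,dV=0$; since $R\ge 0$ this gives $R\equiv 0$, i.e.\ $(M^3,g)$ is scalar flat and hence a global minimum of $\mathfrak{S}^2$.

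The main obstacle is that no curvature bound is assumed, so one can control neither $\Delta\phi$ nor the volume growth of $(M^3,g)$. This is precisely why both steps are arranged to involve only $|\nabla\phi|\le C/r$ and never a Laplacian of the cutoff, so that no Laplacian comparison is needed, and why the finite--energy hypothesis enters with the exact exponent $2$, matching the quadratic right--hand side of \eqref{eq1bis}. The genuinely delicate point is the sign reduction of Step 1: for signed $R$ the cubic term $\int_M R^3\phi^2$ appearing in Step 2 is not controlled by the $L^2$ norm, and it is exactly the superharmonicity produced by the negative coefficient in \eqref{eq1bis}, a feature special to $n=3$, that lets one first dispose of the negative part $R_-$; this is also why the higher--dimensional case (iv) instead requires the extra assumption that $R$ be bounded below.
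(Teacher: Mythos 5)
Your Step 1 is correct and is essentially the paper's own first step: Lemma \ref{estimate} with $n=3$ and $\alpha=q-2$ is exactly your Caccioppoli estimate on the negative set of $R$ (the cubic term there carries the coefficient $\frac{n-4}{2(n-1)(1+\alpha)}<0$ and is simply discarded), and it yields $\nabla R\equiv 0$ on $\{R<0\}$, hence $R\ge 0$.

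Step 2, however, contains a sign error that is fatal. In dimension three \eqref{eq1bis} reads $\Delta R=-\frac18 R^2$, so multiplying by $R\phi^2\ge 0$ and integrating by parts gives
$$
\int_M\phi^2|\nabla R|^2\,dV+2\int_M R\,\phi\,\langle\nabla R,\nabla\phi\rangle\,dV\,=\,+\frac18\int_M R^3\phi^2\,dV,
$$
and hence, after Young's inequality,
$$
\frac12\int_M\phi^2|\nabla R|^2\,dV\,\le\,\frac18\int_M R^3\phi^2\,dV+2\int_M R^2|\nabla\phi|^2\,dV.
$$
The cubic term lands on the right-hand side with a plus sign: it is not controlled by the $L^2$ norm, and it controls nothing. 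Your displayed inequality, with $\frac18\int R^3\phi^2$ on the left, would be valid precisely if $R$ were \emph{sub}harmonic, i.e.\ if the coefficient $\frac{n-4}{4(n-1)}$ were positive; that is the situation exploited in the paper's Lemma \ref{l-hdnonpos} for $n>4$, where the same computation kills the \emph{positive} part of $R$. For $n=3$ the superharmonicity that makes Step 1 work is exactly what makes Step 2 fail: once $R\ge 0$, the traced equation alone only tells you that $R$ is a nonnegative superharmonic $L^2$ function, and no cutoff argument of this type forces such a function to vanish on a general complete manifold. This is why the paper, after establishing $R\ge 0$, does not run a second test-function argument but instead invokes the rigidity theorem of \cite[Theorem 1.2]{cat1} for complete critical metrics of $\mathfrak{S}^2$ with nonnegative scalar curvature, which relies on the full tensorial equation \eqref{eqbis} rather than merely its trace. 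To complete your proof you would need to supply that second, genuinely geometric, ingredient.
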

\begin{rem}\label{r-3d} We actually prove a stronger result: let $(M^{3},g)$ be a three-dimensional complete, non-compact, critical metric of $\mathfrak{S}^{2}$ with $R_g\in L^q(M^3)$ for some $q\in
(1,\infty)$. Then $(M^{3},g)$ is scalar flat, and thus a global minimum of the functional.
\end{rem}
We explicitly note that here we do not assume any sign condition on the scalar curvature. In the compact case the result easily follows integrating the trace of the Euler-Lagrange equation of the functional. In the non-compact case the proof consists in two steps: first, using a test function argument, we show that under our hypotheses one must have that $R_g\geq0$, then we conclude using a previous result for manifolds with non-negative scalar curvature by the first author, see \cite{cat1}.

\

In dimension $n=4$ the functionals $\mathfrak{F}^2_t$ and $\mathfrak{S}^2$ are scaling-invariant. In particular for $t=-\frac{1}{3}$ a metric is critical if and only if it is Bach-flat, and hence there are several well known examples of solutions in the literature, both in the compact and in the complete non-compact case, e.g. conformally Einstein metrics and (anti-)self-dual metrics. In particular, in \cite{tiavia} the authors study complete, non-compact Bach-flat metrics with finite energy. In case $t\neq-\frac{1}{3}$ or $t=+\infty$, from equations \eqref{eq-var-ft-tr} and \eqref{eq1bis} one immediately sees that the metric has harmonic scalar curvature. In particular, in the compact case one has that $R_g$ must be constant. We note that examples of compact critical metrics of $\mathfrak{F}_t^2$ for some values of $t\neq-\frac{1}{3}$ which are not Einstein have been constructed in \cite{gurviacon}. On the other hand, one can deduce from \eqref{eqbis} that compact critical metrics of $\mathfrak{S}^2$ are Einstein. In the non-compact case a classical result of Yau~\cite{yau3} implies that a critical metric with $R_g\in L^{q}(M^{4})$ for some $q\in(1,+\infty)$ must have constant scalar curvature; then $(M^4,g)$ is scalar flat or it has finite volume. Moreover from equation~\eqref{eqbis} one has that a critical metric for $\mathfrak{S}^2$ is either scalar flat or Einstein with negative curvature and finite volume.

\

Finally, in the case $n>4$, it is clear that one cannot hope to prove rigidity results for critical metrics of $\mathfrak{F}_t^2$ without assuming further conditions on the full curvature tensor of the metric. Therefore, we focus our attention on $\mathfrak{S}^2$ and we prove that all critical metrics are scalar flat, assuming that the scalar curvature is bounded below and that it satisfies an integrability condition.
\begin{teo}\label{t-hd}
Let $(M^{n},g)$, $n>4$, be a complete critical metric of $\mathfrak{S}^{2}$ with finite energy, i.e. $R_g\in L^2(M^n)$, and with $R_g$ bounded from below on $M^n$. Then $(M^{n},g)$ is scalar flat, and thus a global minimum of the functional.
\end{teo}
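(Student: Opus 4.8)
The plan is to work with the two key equations governing critical metrics of $\mathfrak{S}^2$: the traced Euler--Lagrange equation \eqref{eq1bis}, which for $n>4$ reads $\Delta R_g = \frac{n-4}{4(n-1)} R_g^2$ with positive coefficient, together with the full equation \eqref{eqbis}. Since $R_g$ is bounded below, say $R_g \geq -C$ with $C \geq 0$, the idea is to exploit the sign of the Laplacian. First I would try to establish that $R_g \geq 0$ everywhere. The natural tool is a test-function argument analogous to the one announced for Theorem \ref{t-3d}: multiply \eqref{eq1bis} by a suitable power of the negative part of $R_g$ (or of a shifted quantity) times a cutoff $\varphi^2$, integrate by parts, and use the finite-energy hypothesis $R_g \in L^2$ together with the lower bound to control the boundary/cutoff terms and force the negative part to vanish. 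Because $\Delta R_g$ has the sign of $R_g^2 \geq 0$, $R_g$ is subharmonic, and an $L^q$-subharmonic function on a complete manifold that is bounded below should be forced to be constant, or at least nonnegative, by a Yau/Karp-type Liouville argument.

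Once $R_g \geq 0$ is known, the second step is to upgrade this to $R_g \equiv 0$. Here I would integrate \eqref{eq1bis} against a cutoff and use the finite-energy assumption more carefully. Testing $\Delta R_g = c\, R_g^2$ (with $c = \frac{n-4}{4(n-1)} > 0$) against $\varphi^2$ and integrating by parts gives $c \int R_g^2 \varphi^2 = -\int \langle \nabla R_g, \nabla \varphi^2 \rangle$, and the right-hand side is controlled via Cauchy--Schwarz by $\|\nabla R_g\|$ against $\|R_g \nabla \varphi\|$; combined with a Caccioppoli-type estimate for $\nabla R_g$ (obtained by testing against $R_g \varphi^2$ and using the finite energy of $R_g$) and letting the cutoff radius tend to infinity, one should conclude $\int R_g^3 = 0$ or $\int R_g^2 = 0$, whence $R_g \equiv 0$. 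The nonnegativity from the first step is what makes these integral estimates close, since it removes cancellation in the cubic terms.

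I would expect the main obstacle to be the first step, namely establishing nonnegativity of $R_g$ from the lower bound plus finite energy without any a priori control on $\nabla R_g$ or on the Ricci tensor. The difficulty is that the new pointwise and integral estimates advertised in the introduction presumably feed into controlling the gradient terms that appear after integration by parts; in particular one must handle the coupling between $R_g$ and $\ricc_g$ coming from the full equation \eqref{eqbis}, and ensure that the cutoff functions' gradient contributions genuinely vanish in the limit, which requires the finite-energy hypothesis to dominate the relevant boundary integrals. The role of the dimension restriction $n > 4$ is essential precisely because it guarantees the favorable sign of the coefficient in \eqref{eq1bis}, making $R_g$ subharmonic; for $n < 4$ the sign reverses and this strategy would fail.
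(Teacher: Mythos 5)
There is a genuine gap, and it occurs at the very first step: the sign is inverted. For $n>4$ the traced equation \eqref{eq1} reads $\Delta R = \frac{n-4}{4(n-1)}R^2\geq 0$, so $R$ is \emph{subharmonic}, and the test-function argument you describe (multiplying by a power of $R$ times a cutoff on the set where $R$ has a fixed sign and integrating by parts) closes with the correct signs only on the set $\{R>0\}$: there one gets
$\int_{\{R>0\}} R^{q+1}\eta^2 = \tfrac{4(n-1)}{n-4}\int R^{q-1}\Delta R\,\eta^2 \leq \tfrac{C}{s^2}\int |R|^q \to 0$,
which kills the \emph{positive} part of $R$. The conclusion of this step is therefore $R\leq 0$ (and then, by the strong maximum principle applied to the subharmonic function $R$, either $R\equiv 0$ or $R<0$ everywhere). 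There is no Liouville-type statement to the effect that an $L^q$ subharmonic function bounded below must be nonnegative, and your proposed step one cannot be repaired: the sign structure for $n>4$ is exactly opposite to the three-dimensional case (where the coefficient $\frac{n-4}{4(n-1)}$ is negative, $R$ is superharmonic, and one does obtain $R\geq 0$). Consequently your step two, which relies on $R\geq 0$ to remove cancellation in the cubic term, never gets off the ground.

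The real content of the theorem is ruling out the case $R<0$ everywhere, which your proposal does not address. This is where the hypothesis that $R$ is bounded below enters --- not to control cutoff terms, but to guarantee, after the change of variable $f=-\log(-R)$, that the $1$--Bakry--Emery Ricci tensor $\Ric^{1}_{f}$ is bounded below via \eqref{eqf}, so that the $f$-Laplacian comparison and the Omori--Yau maximum principle are available. These yield first the gradient estimate $|\nabla R|^2\leq CR^2$, then the sharper $|\nabla R|^2\leq C|R|^3$, hence the decay $R(x)\leq -c_1/(c_2+d_g(x,O)^2)$, and only then does a final weighted integral estimate (testing \eqref{eq1} against $u^{q-2}\eta^2$ with $u=-R$) produce a contradiction --- and even then only for $1<q<q^*$ with $q^*>2$ depending on the constant in the gradient estimate, which is why the finite-energy hypothesis $q=2$ suffices but the argument is not a soft Caccioppoli/Liouville iteration.
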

\begin{rem}\label{r-hd} Actually we prove a stronger result: there exists $q^*>2$ such that if $(M^{n},g)$, $n>4$, is a complete critical metric of $\mathfrak{S}^{2}$ with $R_g$ bounded from below on $M^n$ and with $R_g\in L^q(M^n)$ for some $1<q<q^*$, then $(M^{n},g)$ is scalar flat. See the proof of Theorem \ref{t-hd} and Remark \ref{remf}.
\end{rem}
We explicitly note that also in this case we do not assume any sign condition on $R_g$. The proof of the result does not rely on previous works on critical metrics for quadratic curvature functionals and is divided into three steps: first, using a test function argument, we show that under our assumptions $R_g$ must either vanish identically or be strictly negative on $M^n$; then we prove a gradient estimate for the function $R_g$, under the assumption that it is strictly negative on $M^n$, using the Omori-Yau maximum principle for the $f$-Laplace operator and the $f$-Laplacian comparison theorem for the function $f=-\log|R_g|$; the third and final step, which relies on the previous gradient estimate, consists in showing that the case $R_g<0$ on $M^n$ cannot occur, using another test function argument.

\

The rest of the paper is organized as follows: in Section \ref{sec2} we derive the Euler-Lagrange equations associated to $\mathfrak{F}^2_t$ and to $\mathfrak{S}^2$, in any dimension; in Section \ref{sec3} we consider the case $n=2$ and we prove Theorem \ref{t-2d}; in Section \ref{sec4} we study the case $n=3$ and we provide the proofs of Theorems \ref{t-3dFt} and \ref{t-3d} (including Remark \ref{r-3d}); finally in Section \ref{sec5} we consider the case when $n>4$ and we show Theorem \ref{t-hd} (including Remark \ref{r-hd}).

\

\

\begin{ackn}
\noindent The authors would like to thank Prof. M. T. Anderson for many helpful discussions.

The first and second authors are members of the {\em GNSAGA, Gruppo Nazionale per le Strutture Algebriche, Geometriche e le loro Applicazioni} of Indam. The third author is a member of {\em GNAMPA, Gruppo Nazionale per l'Analisi Matematica, la Probabilità e le loro Applicazioni} of Indam and has been partially supported by {\em 2020 GNAMPA Project: Equazioni Ellittiche e Paraboliche in Analisi Geometrica}.
\end{ackn}

\

\section{Euler-Lagrange Equations}\label{sec2}

From now on we will drop the subscript $g$ in the notation of geometric objects. In this section we will follow closely the presentation in \cite{CatinoQuadratic} and \cite{CMbook}.

We fix the index range
$1\leq i, j, \ldots \leq n$ and recall that the Einstein summation
convention will be in force throughout.
The standard decomposition of the $(0,
4)$-version of the Riemann tensor   is given by the formula
\begin{equation}\label{Riemann_Weyl}
  R_{ijkt} = W_{ijkt} + \frac{1}{n-2}\pa{R_{ik}g_{jt}-R_{it}g_{jk}+R_{jt}g_{ik}-R_{jk}g_{it}}-\frac{R}{(n-1)(n-2)}\pa{g_{ik}g_{jt}-g_{it}g_{jk}},
\end{equation}
where $g_{ij}$, $W_{ijkt}$ and $R_{ij}$ are the components, respectively, of the metric $g$, of the Weyl tensor $\operatorname{W}$ and of the Ricci tensor $\ricc$, while $R$ is the scalar curvature. Note that
 $R_{ij} = g^{kl}R_{ikjl} = R_{kilj}$ and $R = g^{ij}R_{ij}$, where $g^{ij}$  are the components of the inverse of metric $g$, $g^{-1}$.

%
%

We  consider the quadratic curvature functionals
$$
\mathfrak{F}^{2}_t = \int |\ricc|^{2} dV +  t \int R^{2}  dV \,,
$$
defined for some constant $t\in \RR$ (with $t = +\infty$ formally corresponding to the functional $\mathfrak{S}$).
Using formulas for the variations of the Ricci tensor and of the scalar curvature  and integrating by parts (using compactly supported ``directions'' $h$), it is not difficult to show that the so-called gradients of the functionals $\mathfrak{r}^2$ and $\mathfrak{S}^2$ are given by (see also~\cite[Proposition 4.66]{besse})
$$
(\nabla \mathfrak{r}^2)_{ij} = -\Delta R_{ij} - 2 R_{ikjl}R_{kl}+\nabla^{2}_{ij} R - \frac{1}{2}(\Delta R)g_{ij} + \frac{1}{2} |\ricc|^{2} g_{ij} \,,
$$

$$
(\nabla \mathfrak{S}^2)_{ij} = 2 \nabla^{2}_{ij} R - 2(\Delta R) g_{ij} -2 R R_{ij} + \frac{1}{2} R^{2} g_{ij} \,;
$$
hence, the gradient of $\mathfrak{F}^{2}_t$ reads
\begin{equation}
(\nabla \mathfrak{F}^{2}_t)_{ij} = -\Delta R_{ij} +(1+2t)\nabla^{2}_{ij} R - \frac{1+4t}{2}(\Delta R)g_{ij} + \frac{1}{2} \Big( |\ricc|^{2}+ t R^{2}\Big) g_{ij} - 2 R_{ikjl}R_{kl} -2t R R_{ij} \,,
\end{equation}
and consequently the Euler--Lagrange equation for a critical metric of $\mathfrak{F}^{2}_t$ are given by

\begin{equation}\label{eq-t}
-\Delta R_{ij} +(1+2t)\nabla^{2}_{ij} R - \frac{1+4t}{2}(\Delta R)g_{ij} + \frac{1}{2} \Big( |\ricc|^{2}+ t R^{2}\Big) g_{ij} - 2 R_{ikjl}R_{kl} -2t R R_{ij}=0.
\end{equation}
Tracing the equation $(\nabla \mathfrak{F}^{2}_t) = 0$, we obtain
\begin{equation}\label{eq-tt}
 \Big(n+4(n-1)t\Big)\Delta R \,=\,(n-4)\Big(|\ricc|^{2}+ t R^{2}\Big) \,.
\end{equation}

Moreover, the Euler--Lagrange equation for a critical metric of $\mathfrak{S}^2$ is given by
$$
2R \, \ricc - 2\nabla^{2} R + 2\Delta R\, g \,=\, \frac{1}{2} R^{2}\,g \,,
$$
or, equivalently,
\begin{equation}\label{eq}
R \,\ricc - \nabla^{2} R \,=\, \frac{3}{4(n-1)} R^{2}\,g \,,
\end{equation}
\begin{equation}\label{eq1}
\Delta R \,=\, \frac{n-4}{4(n-1)} R^{2} \,,
\end{equation}
where equation~\eqref{eq1} is just the trace of~\eqref{eq}.


\

\section{Critical surfaces}\label{sec3}

Let $(M^2, g)$ be a two-dimensional complete critical metric of $\mathfrak{S}^{2}$. Since $\ricc=\frac12 R\,g$, equation \eqref{eq} reads
\begin{equation}\label{eq2d}
\nabla^2 R = -\frac14 R^2\,g.
\end{equation}
In particular $(M^2, g)$ is a special gradient conformal soliton with potential function $R$ and hence $\nabla R$ is a special conformal vector field. Complete Riemannian manifolds admitting a vector field $X$ satisfying
$$
\mathcal{L}_X g = \lambda g
$$
for some smooth function $\lambda$, were studied by many authors in the late 60's (see, for instance, the discussion in \cite{cheegcold, catmanmazcon}).
\begin{proof}[Proof of Theorem \ref{t-2d}]
First of all, if $M^2$ is compact, then the tracing  equation \eqref{eq2d} we get
$$
\Delta R = -\frac34 R^2,
$$
thus, integrating over $M^2$ we get $R\equiv 0$ on $M^2$.

\smallskip

On the other hand, suppose that $M^2$ is non-compact and $g$ is not flat. By \eqref{eq2d} the potential function $f:=R$ is non-constant. Let $\Sigma$ be a regular level set of the function $f:M^2\to\RR$,
i.e. $|\nabla f|\neq 0$ on $\Sigma$, which exists by Sard's Theorem (and the fact that $f$ is non-constant). Following \cite{catmanmazcon} (see also \cite{cheegcold}), one can easily show that, in a neighborhood $U$ of $\Sigma$ not
containing any critical point of $f$, such potential function only depends
on the signed distance $r$ to the hypersurface $\Sigma$. Moreover, the width of the neighborhood $U$ is uniform with respect to the points of $\Sigma$, namely we can assume $U=\{r_{*}<r<r^{*}\}$, for some maximal $r_{*}\in[-\infty,0)$ and $r^{*}\in(0,+\infty]$. By the scalar invariance of equation~\eqref{eq2d}, we can assume that $f'(0)=1$, possibly changing the function $f$. Hence, in $U$, the metric can be written as
\begin{equation}\label{metric}
g \, = \, dr \otimes dr \,+ \,f'(r)^{2}\, g^{\Sigma}\,,
\end{equation}
where $g^{\Sigma}$ denotes the induced metric on the level set $\Sigma$. Since the hessian of $f$ is given by
$$
\nabla^2 f = f'' dr\otimes dr + (f')^2f''\, g^{\Sigma},
$$
from equation \eqref{eq2d}, we get that the potential function satisfies the following ODE
\begin{equation}\label{101}
f''(r)=-\frac14 (f(r))^2,\quad r\in(r_*, r^*).
\end{equation}
It is then immediate to see that
\begin{equation}\label{102}
(f'(r))^2+\frac{1}{6}(f(r))^3=\operatorname{const},\quad r\in(r_*, r^*)
\end{equation}
is a conserved quantity of the ODE \eqref{101}. One can then carry out a standard phase--plane analysis and a qualitative study of the solutions of \eqref{101}. Indeed one can rewrite \eqref{101} as a system
\begin{equation*}
  \begin{cases}
    x'=y, \\
    y'=-\frac{1}{4}x^2,
  \end{cases}
\end{equation*}
with $f=x$, $f'=y$, and note that by \eqref{102} all solutions must lie on level sets of the function $F(x,y)=y^2+\frac{1}{6}x^3$. Moreover one can estimate the maximal interval of existence of solutions of \eqref{101} by rewriting \eqref{102} as $$\frac{df}{\sqrt{c-\frac{1}{6}f^3}}=\pm dr,$$ for $c\in\mathbb{R}$, and integrating. Then we see that all nontrivial solutions of the ODE can be divided into three families, according to their qualitative behavior.

Solutions in the first family are bounded above and have maximal domain that is a bounded open interval. If $f:(r_*,r^*)\rightarrow\mathbb{R}$ is one of these solutions, then $$\lim_{r\rightarrow r_*}f(r)=\lim_{r\rightarrow r^*}f(r)=-\infty,$$ $f'$ is strictly monotone decreasing with  $$\lim_{r\rightarrow r_*}f'(r)=+\infty,\qquad\lim_{r\rightarrow r^*}f'(r)=-\infty,$$ $f$ has a unique critical point $r_0$ and $f(r_0)=\max f(r)\neq0$.

Solutions in the second family are strictly negative and have maximal domain that is a half--line which is unbounded below, i.e. $r_*=-\infty$. If $f:(-\infty,r^*)\rightarrow\mathbb{R}$ is one of the solutions in this family, then $$\lim_{r\rightarrow-\infty}f(r)=0,\qquad\lim_{r\rightarrow r^*}f(r)=-\infty,$$ $f'$ is strictly monotone decreasing with $$\lim_{r\rightarrow -\infty}f'(r)=0,\qquad\lim_{r\rightarrow r^*}f'(r)=-\infty$$ and $f$ has no critical points.

Finally, solutions of the third family are strictly negative and have maximal domain that is a half--line which is unbounded above, i.e. $r^*=+\infty$. If $f:(r_*,+\infty)\rightarrow\mathbb{R}$ is one of these solutions, then $$\lim_{r\rightarrow r_*}f(r)=-\infty,\qquad\lim_{r\rightarrow+\infty}f(r)=0,$$ $f'$ is strictly monotone decreasing with $$\lim_{r\rightarrow r_*}f'(r)=+\infty,\qquad\lim_{r\rightarrow +\infty}f'(r)=0$$ and $f$ also in this case has no critical points.

Now, if $f$ has no critical points, since $(M^2, g)$ is complete, non-compact, the width of the maximal
neighborhood $U$ should be unbounded in both the negative and the positive direction
of the signed distance $r$ (i.e. we should have $r_* = -\infty$ and $r^* = +\infty$). On the other hand, if $f$ has only one critical point, since $(M^2, g)$ is complete, the width of the neighborhood $U$ should be unbounded in
the positive or in the negative direction of the signed distance (i.e. we should have $r_* = -\infty$ or $r^* = +\infty$, while the other endpoint of the maximal domain should be finite). This is clearly in contradiction with the above qualitative study of the solutions of \eqref{101}. Thus, we conclude that necessarily $f=R\equiv0$ and $(M^2, g)$ is flat.

\end{proof}

\

\section{Dimension three}\label{sec4}

\subsection{Anderson's theory}
On a three-dimensional manifold $M^3$ we consider the quadratic curvature functionals
$$
\mathfrak{F}^{2}_t = \int |\ricc|^{2} dV +  t \int R^{2}  dV \,, \quad t\in\mathbb{R}.
$$
From Section \ref{sec2} (see equation \eqref{eq-t}), in dimension three, using $\operatorname{W}=0$ and the decomposition \eqref{Riemann_Weyl}, we have
\[
R_{ikjl}R_{kl}= \frac{3}{2}R R_{ij} + \pa{\abs{\ricc}^2-\frac{1}{2}R^2}g_{ij}-2R_{ik}R_{kj};
\]
thus, the Euler--Lagrange equation for a (smooth) critical metric of $\mathfrak{F}^{2}_t$ reads
\begin{equation}\label{s1}
-\Delta R_{ij} +(1+2t)\nabla^{2}_{ij} R - \frac{1+4t}{2}(\Delta R)g_{ij} - \frac{3}{2}|\ricc|^{2}g_{ij}+\frac{2+t}{2}R^{2} g_{ij} -(3+2t) R R_{ij} + 4R_{ik}R_{kj} = 0,
\end{equation}
which upon tracing yields
\begin{equation}\label{s2}
 \Big(3+8t\Big)\Delta R =-\Big(|\ricc|^{2}+ t R^{2}\Big) \,.
\end{equation}
We will follow closely the theory developed by Anderson in \cite{and1}. We say that $g$ is a weak $L^{2,2}$ solution to the system \eqref{s1}-\eqref{s2} if, for every two-tensor $h\in L^{2,2}$ with compact support, we have
$$
\int_M \left\langle (\nabla \mathfrak{F}^{2}_t)_{ij},h_{ij}\right\rangle\,dV = \int_M \operatorname{tr}(\nabla \mathfrak{F}^{2}_t)\operatorname{tr}(h)\,dV = 0,
$$
where the $L^{2,2}$ topology on the space of metrics is given by the norm
\[
\|h\|^2_{L^{2,2}} = \int_M\pa{\abs{\nabla^2h}^2+\abs{\nabla h}^2+\abs{h}^2}\,dV
\]
 (see \cite[Section 1]{and1} for details).
Of course here one must perform the appropriate (formal) integration by parts to obtain well defined integrals.

The proof of Theorem \ref{t-3dFt} is by contradiction: assuming a critical metric of $\mathfrak{F}_t^2$ is non-flat, one can construct another non-flat critical metric with bounded (Ricci) curvature. By the Omori-Yau maximum principle the solution has non-negative scalar curvature which in turn implies it must be flat, thus reaching a contradiction. The following lemmas are technical steps which are used for the proof of Theorem \ref{t-3dFt} and are extension of results (regularity, blow-up argument and rigidity) proved by M. Anderson for specific values of the parameter $t$.

\begin{lemma}[{\cite[Theorem 4.1, $t=-1/4$]{and1}}]\label{l-regularity} Let $t\neq -\frac38$, $U$ be a domain in a three-dimensional manifold $M^3$ and suppose $g$ is a $L^{2,2}$ weak solution of the Euler--Lagrange equation $\nabla \mathfrak{F}^{2}_t=0$ on $U$, i.e. $g$ is an $L^{2,2}$ critical metric of $\mathfrak{F}^{2}_t$ on $U$. Then $g$ is smooth.
\end{lemma}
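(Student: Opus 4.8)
The plan is to follow closely the strategy of Anderson in \cite{and1}, observing that the only place where the parameter $t$ enters the ellipticity of the system is through the scalar coefficient $3+8t$ appearing in the traced equation \eqref{s2}. First I would note that the Euler--Lagrange system \eqref{s1} is a fourth order system in the metric $g$ which, because of its diffeomorphism invariance, cannot be elliptic as it stands: the gauge freedom produces a nontrivial kernel in the principal symbol. To remove this degeneracy I would fix the gauge by passing to $g$-harmonic coordinates (equivalently, via the DeTurck trick), in which the Ricci tensor takes the form $R_{ij} = -\frac{1}{2} g^{ab}\partial_a\partial_b g_{ij} + Q_{ij}(g,\partial g)$, with $Q_{ij}$ quadratic in the first derivatives of $g$. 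In these coordinates \eqref{s1} becomes a determined fourth order quasilinear system for the components $g_{ij}$.

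The central computation is the principal symbol of this gauge-fixed operator. Substituting $R_{ij}\approx -\tfrac12\Delta g_{ij}$ and $R\approx-\tfrac12 g^{ab}\Delta g_{ab}$ into the top-order terms $-\Delta R_{ij}+(1+2t)\nabla^2_{ij}R-\tfrac{1+4t}{2}(\Delta R)g_{ij}$ of \eqref{s1}, I would compute the symbol $\sigma_\xi$ acting on symmetric two-tensors $h$. The symbol decouples into a transverse--traceless block, whose symbol is $\tfrac12|\xi|^4$ and is always invertible, and a complementary finite dimensional block involving the trace of $h$ together with its $\xi\otimes\xi$ component; it is on this latter block that the factor $3+8t$ appears, and one checks that it is invertible if and only if $3+8t\neq0$, that is $t\neq-\tfrac38$. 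This is the algebraic heart of the matter, and it explains the exclusion of the value $t=-\tfrac38$, which corresponds to the variationally degenerate $\sigma_2$-functional.

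Once ellipticity is established for $t\neq-\tfrac38$, regularity follows by a standard bootstrap. Starting from a weak $L^{2,2}$ solution, I would use $L^p$ elliptic estimates (or a difference-quotient argument) for the frozen-coefficient linearization to upgrade $g\in L^{2,2}$ to $L^{3,2}$, then $L^{4,2}$, and iterate: at each stage the lower order terms $Q_{ij}$ and the quadratic curvature terms in \eqref{s1} and \eqref{s2} gain regularity, feeding Schauder and $L^p$ theory until $g\in C^\infty$. The trace equation \eqref{s2}, which is now genuinely second order in $R$ since $3+8t\neq0$, can be used in tandem to control the scalar curvature at each step.

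The main obstacle I anticipate is twofold. First, the initial regularity is borderline: in dimension three $L^{2,2}$ embeds only into $C^{0,1/2}$, so the metric is not a priori $C^1$, and the construction of $g$-harmonic coordinates together with the justification of the first bootstrap step must be carried out with care at this low regularity, exactly as in \cite{and1}. Second, one must verify that the gauge-fixing terms do not interfere with the leading-order symbol computation and that the formal integrations by parts in the weak formulation produce well defined integrals. Both points are treated in Anderson's framework for $t=-\tfrac14$, and since the only $t$-dependence in the ellipticity condition is the explicit factor $3+8t$, his argument extends essentially verbatim to every $t\neq-\tfrac38$.
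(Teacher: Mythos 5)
Your proposal takes essentially the same route as the paper, whose proof is a two-line sketch deferring to Anderson's regularity theorem for $t=-1/4$ and observing that the only $t$-dependence in the argument is the factor $3+8t$ in the traced equation \eqref{s2}, so that regularity of $R$ (and hence the bootstrap) fails exactly at $t=-\frac38$. Your symbol computation correctly locates the same degeneracy, so the proposal is consistent with, and more explicit than, the paper's argument.
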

\begin{proof}[Proof (sketch)] Due to the form of the system \eqref{s1}-\eqref{s2}, the proof of this lemma is the same as in \cite[Theorem 4.1]{and1} for the case $t=-1/4$ for every $t\in\RR$ and $t\neq -\frac38$. We note that, if $t=-\frac38$, we cannot gain regularity from the traced equation \eqref{s2} and the proof does not work.
\end{proof}

\begin{lemma}[{\cite[Lemma 2.1, $t=0$]{and2}}]\label{l-blowup} Let $(M^3,g)$ be a complete, non-flat, critical solution of $\mathfrak{F}^{2}_t$ for $t\neq -\frac38$. Then there exists another complete, non-flat, critical solution of $\mathfrak{F}^{2}_t$, $(\bar{M}^3, \bar{g})$, which has uniformly bounded curvature, i.e.
$$
|\ricc_{\bar{g}}|_{\bar{g}} \leq 1\quad\text{on } \bar{M}^3.
$$
\end{lemma}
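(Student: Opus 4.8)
The plan is to exploit the fact that, although $\mathfrak{F}^{2}_t$ is not scale-invariant in dimension three, its Euler--Lagrange equation is scale-covariant. A direct inspection of \eqref{s1} shows that under a constant rescaling $g\mapsto \lambda^2 g$, $\lambda>0$, the Ricci tensor $R_{ij}$ and the Hessian and Laplacian operators acting on it transform so that every single term of \eqref{s1} picks up the same factor $\lambda^{-2}$; hence $g$ is critical if and only if $\lambda^2 g$ is, while $|\ricc_{\lambda^2 g}|_{\lambda^2 g}=\lambda^{-2}|\ricc_g|_g$. First I would set $\Lambda:=\sup_M|\ricc_g|_g\in(0,+\infty]$, which is positive since in dimension three a metric is flat if and only if its Ricci tensor vanishes, and $g$ is non-flat by hypothesis. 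If $\Lambda<+\infty$ the lemma is immediate: $\bar g:=\Lambda g$ is complete, non-flat and satisfies $|\ricc_{\bar g}|_{\bar g}\le 1$. The whole content therefore lies in the case $\Lambda=+\infty$.

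When $\Lambda=+\infty$, the plan is a blow-up at the points of maximal curvature concentration. Using Hamilton's point-selection lemma I would produce, for every $k\in\NN$, a point $p_k\in M$ with $Q_k:=|\ricc_g|_g(p_k)\to+\infty$ such that, after rescaling $g_k:=Q_k\,g$, one has $|\ricc_{g_k}|_{g_k}(p_k)=1$ and $|\ricc_{g_k}|_{g_k}\le 2$ on the $g_k$-geodesic ball $B_{g_k}(p_k,k)$, whose radius diverges. Each $(M,g_k)$ is still a smooth critical metric of $\mathfrak{F}^{2}_t$ by the scale-covariance above, and since in dimension three the Weyl tensor vanishes, the decomposition \eqref{Riemann_Weyl} converts the Ricci bound into a full bound $|\Rm_{g_k}|_{g_k}\le C$ on these balls.

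I would then upgrade the zeroth-order bound to local $C^\infty$ control. Writing the critical system \eqref{s1}--\eqref{s2} in harmonic coordinates, where it becomes elliptic in the metric, and invoking the interior elliptic estimates underlying Lemma \ref{l-regularity} together with a standard Schauder bootstrap, I would obtain uniform local bounds on all covariant derivatives of $\Rm_{g_k}$ on $B_{g_k}(p_k,k)$. With these bounds and a lower bound on the injectivity radius at the base points, pointed Cheeger--Gromov compactness yields a subsequence converging in $C^\infty_{\mathrm{loc}}$ to a pointed limit $(\bar M^3,\bar g,\bar p)$. This limit is complete, since the radii diverge, satisfies $|\ricc_{\bar g}|_{\bar g}\le 2$ with $|\ricc_{\bar g}|_{\bar g}(\bar p)=1$, hence is non-flat, and, because \eqref{s1} passes to a $C^\infty_{\mathrm{loc}}$ limit, is itself critical for $\mathfrak{F}^{2}_t$; a final harmless rescaling by $\sup_{\bar M}|\ricc_{\bar g}|_{\bar g}\in[1,2]$ gives $|\ricc_{\bar g}|_{\bar g}\le 1$.

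The main obstacle is the injectivity radius (non-collapsing) estimate at the base points $p_k$ needed to run Cheeger--Gromov compactness, which is exactly the delicate point of Anderson's theory that I would import from \cite{and1,and2}. The idea is to use the $\varepsilon$-regularity for the critical system: there is $\varepsilon_0>0$ such that scale-invariant smallness of $\int_{B}|\Rm|^{3/2}\,dV$ forces a pointwise curvature bound on the smaller ball. Since $|\ricc_{g_k}|_{g_k}(p_k)=1$, $\varepsilon$-regularity prevents the curvature from being small near $p_k$, giving $\int_{B_{g_k}(p_k,1)}|\Rm_{g_k}|^{3/2}\,dV_{g_k}\ge\varepsilon_0$; combined with $|\Rm_{g_k}|_{g_k}\le C$ this forces $\mathrm{Vol}_{g_k}(B_{g_k}(p_k,1))\ge \varepsilon_0 C^{-3/2}$, i.e. volume non-collapsing at $p_k$, whence the injectivity radius bound follows by Cheeger's lemma. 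Verifying this $\varepsilon$-regularity in the present range $t\neq-\frac38$, which is precisely where Lemma \ref{l-regularity} is available, is the technical heart of the argument.
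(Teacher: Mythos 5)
Your overall strategy --- scale covariance of \eqref{s1}, point selection at curvature concentration, elliptic bootstrap from Lemma \ref{l-regularity}, and a pointed limit --- is the right skeleton, and it is essentially the argument of Anderson's Lemma 2.1 in \cite{and2}, to which the paper's proof simply defers after checking that the regularity input of Lemma \ref{l-regularity} is available for all $t\neq-\frac38$. Your scale-covariance computation and the reduction to the case $\sup_M|\ricc_g|_g=+\infty$ are correct, as is the observation that in dimension three the Ricci bound controls the full curvature tensor.

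The genuine gap is the non-collapsing step, and the fix you propose does not work. First, the $\varepsilon$-regularity you invoke is itself proved in \cite{and1} under a volume lower bound on unit balls: Anderson's harmonic-radius and Moser-iteration estimates take $\mathrm{Vol}(B(x,1))\ge v_0$ (equivalently, a Sobolev constant bound) as a hypothesis, so using $\varepsilon$-regularity to \emph{derive} volume non-collapsing is circular; the same remark applies to the harmonic-coordinate bootstrap you use to get local $C^\infty$ bounds. Second, even granting an unconditional $\varepsilon$-regularity of the form ``$\int_B|\Rm|^{3/2}<\varepsilon_0\Rightarrow\sup_{B/2}|\Rm|\le C r^{-2}$'', the normalization $|\ricc_{g_k}|_{g_k}(p_k)=1$ is perfectly compatible with the conclusion $|\Rm_{g_k}|\le C$, so no lower bound $\int_{B_{g_k}(p_k,1)}|\Rm_{g_k}|^{3/2}\ge\varepsilon_0$ follows; you would need a quantitative gap theorem, not $\varepsilon$-regularity. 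In fact the rescaled sequence may genuinely collapse, and this is precisely why the lemma only asserts the existence of \emph{some} complete non-flat critical metric with bounded curvature, rather than a pointed Cheeger--Gromov limit of $(M,g_k,p_k)$: in Anderson's proof, when the sequence collapses one uses that the curvature is already uniformly bounded at the blow-up scale, invokes the Cheeger--Fukaya--Gromov structure theory of collapse with bounded curvature to pass to local covers that unwrap the collapse, and takes the limit of the unwrapped, non-collapsed covers. Criticality, the curvature bounds and the normalization $|\ricc|(p_k)=1$ are local isometry invariants and survive the passage to covers, so the limit is still complete, critical, non-flat and has bounded curvature. Your write-up needs this dichotomy (non-collapsed limit versus unwrapped collapsed limit); as it stands the argument fails exactly at the point you flagged as delicate.
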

\begin{proof}[Proof (sketch)] Given the local regularity result of Lemma \ref{l-regularity}, the proof of this lemma is exactly the same as in \cite[Lemma 2.1]{and2} for the case $t=0$.
\end{proof}

\begin{lemma}[{\cite[Theorem 0.1, $t=0$]{and2}}]\label{l-rig} Let $(M^3,g)$ be a complete critical solution of $\mathfrak{F}^{2}_t$ for
$$-\frac13< t < +\infty$$
with non-negative scalar curvature. Then $(M^3,g)$ is flat.
\end{lemma}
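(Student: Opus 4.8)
The plan is to collapse the whole system into a single scalar differential inequality for $R$ and then run a maximum-principle argument. Since in dimension three the Weyl tensor vanishes, the decomposition \eqref{Riemann_Weyl} shows that flatness is equivalent to $\ricc\equiv0$; moreover, if I can prove $R\equiv0$, then the traced equation \eqref{s2} immediately gives $|\ricc|^2\equiv0$, hence flatness. Thus it suffices to show $R\equiv 0$.

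First I would exploit the signs of the structural constants. Since $t>-\tfrac13$ we have $3+8t>3-\tfrac83=\tfrac13>0$, so \eqref{s2} can be rewritten as
\begin{equation*}
\Delta R=-\frac{|\ricc|^2+tR^2}{3+8t}.
\end{equation*}
In dimension three the Cauchy--Schwarz inequality applied to the eigenvalues of $\ricc$ gives $R^2\le 3|\ricc|^2$, whence
\begin{equation*}
|\ricc|^2+tR^2\ge\Big(\tfrac13+t\Big)R^2\ge 0 .
\end{equation*}
Because $\tfrac13+t>0$, setting $c:=\frac{\frac13+t}{3+8t}>0$ I obtain the key inequality
\begin{equation*}
\Delta R\le -c\,R^2\le 0 ,
\end{equation*}
so that, under the hypothesis $R\ge0$, the scalar curvature is a nonnegative superharmonic function on the complete manifold $M^3$ obeying a Keller--Osserman type inequality. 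This purely algebraic step is exactly what replaces Anderson's special computations at $t=0$ and $t=-\tfrac13$, and it is the only place where the restriction $t>-\tfrac13$ is used (it forces both $3+8t>0$ and $\tfrac13+t>0$).

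With this inequality in hand, the remainder follows Anderson \cite{and2}. In the compact case one integrates \eqref{s2}: since $\int_M\Delta R\,dV=0$ and $|\ricc|^2+tR^2\ge(\tfrac13+t)R^2\ge0$, one gets $\int_M R^2\,dV=0$, hence $R\equiv0$ and, as noted, $\ricc\equiv0$. In the complete non-compact case I would invoke a maximum principle at infinity in the spirit of Omori--Yau: using the smoothness granted by Lemma \ref{l-regularity} and applying the principle to the function $-R$, one produces a sequence $x_k$ along which $R(x_k)\to\inf_M R$ and $\liminf_k\Delta R(x_k)\ge0$; comparing with $\Delta R\le -cR^2$ forces $\inf_M R=0$, and then the strong minimum principle for the superharmonic function $R$, again combined with $\Delta R\le-cR^2$, is used to upgrade this to $R\equiv0$. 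As before, $R\equiv0$ yields $\ricc\equiv0$ via \eqref{s2}, and $(M^3,g)$ is flat.

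The main obstacle is precisely this last, non-compact, step. On a general complete three-manifold with only $R\ge0$ there is no a priori lower Ricci bound, so the Omori--Yau principle is not directly available, and a nonnegative superharmonic function need not be constant; what makes the argument work is Anderson's finer analysis of critical metrics, namely the regularity of Lemma \ref{l-regularity} together with the structure of the full system \eqref{s1}, which controls the geometry at infinity well enough to run the maximum principle (or, alternatively, to invoke a Liouville-type theorem for $\Delta R\le -cR^2$ on complete manifolds). Once this machinery from \cite{and1,and2} is established for $t=0$, the passage to every $t\in(-\tfrac13,+\infty)$ reduces to the algebraic estimate above, which is why the proof is identical to Anderson's.
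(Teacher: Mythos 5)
Your algebraic observation is exactly the one the paper uses: in dimension three $|\ricc|^2+tR^2=|\dRic|^2+\frac{1+3t}{3}R^2\ge\big(\tfrac13+t\big)R^2$, and $t>-\tfrac13$ is needed precisely to make this coefficient (and $3+8t$) positive; the reduction ``$R\equiv0$ implies flat via \eqref{s2}'' and the compact case are also fine. The gap is in the non-compact case, which is the entire content of the lemma. First, the Omori--Yau principle is not available under the hypotheses of this statement: the lemma assumes only completeness and $R\ge0$, with no lower Ricci bound (in the paper that principle is applied elsewhere, in the proof of Theorem \ref{t-3dFt}, and only to the blown-up metric of Lemma \ref{l-blowup}, which \emph{does} have bounded curvature --- and there it is used to \emph{derive} $R\ge0$, i.e.\ the hypothesis of the present lemma, not its conclusion). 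Second, even granting a sequence along which $R\to\inf R$ and $\liminf\Delta R\ge0$, you only obtain $\inf_M R=0$; the ``strong minimum principle'' step then fails because on a non-compact manifold the infimum need not be attained, and a non-negative superharmonic function with infimum zero need not vanish (think of $|x|^{-1}$ on $\RR^3\setminus\{0\}$). Third, the fallback you mention --- a Liouville theorem for $\Delta R\le -cR^2$, $R\ge0$ --- is false on a general complete three-manifold (it fails, e.g., on hyperbolic space, where the exponential volume growth defeats Keller--Osserman-type conditions), so the scalar inequality alone cannot close the argument.

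What actually closes it in the paper is not a pointwise maximum principle for $R$ but Anderson's integral argument on the \emph{full} system: after securing regularity (Lemma \ref{l-regularity}) and passing to a solution with bounded curvature (Lemma \ref{l-blowup}), one runs the cutoff estimate of \cite[Proposition 2.2]{and2}, in which the hypothesis $R\ge0$ is used geometrically to control the mean-curvature terms $(H^+)$ of distance spheres, and the displayed inequality $\int_{B(R)}\big(|\dRic|^2+\frac{1+3t-3\mu}{3}R^2\big)\le c_2R^{-2}\int_{B(2R)}(H^+)^2+c_3R^{-\varepsilon}$ forces $\dRic\equiv R\equiv0$ simultaneously as $R\to\infty$ (together with the asymptotic step of \cite[Lemma 2.9]{and2}, where $\Delta R(x_i)\to0$ gives both $|\dRic|(x_i)\to0$ and $R(x_i)\to0$ thanks to $t>-\tfrac13$). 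Your proposal correctly isolates the algebraic role of $t>-\tfrac13$ but does not supply, and misdescribes, the analytic mechanism that makes the non-compact rigidity work.
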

\begin{proof}[Proof (sketch)] The proof is (substantially) contained in \cite{and2}. The case $t=0$ is treated in full detail, whereas when $t=-\frac13$ in \cite[Proposition 5.4]{and2} the author highlights the differences between the two cases. An examination of the proof for the case $t=0$ (i.e. \cite[Theorem 0.1]{and2}) shows that all the arguments remain valid also if $-\frac13<t<+\infty$. First of all, from Lemma \ref{l-regularity} and Lemma \ref{l-blowup}, local regularity of weak solutions and the possibility to use a solution with bounded curvature are guaranteed. The second important observation is that the traced equation \eqref{s2} reads
$$
\Big(3+8t\Big)\Delta R =-\Big(|\ricc|^{2}+ t R^{2}\Big) =-\left(|\dRic|^{2}+ \frac{1+3t}{3} R^{2}\right)\leq -\frac{1+3t}{3} R^2,
$$
and therefore, if $t>-\frac13$, all the proofs still work. Here $\dRic=\ricc-\frac13 R\,g$. In particular, as observed also in the proof of \cite[Proposition 5.4]{and2} for the case $t=-\frac13$, the following two instances can be solved:
\begin{itemize}
\item[(i)] The passage from $(2.7)$ to $(2.8)$ in the proof of \cite[Proposition 2.2]{and2}. In the general case, $(2.7)$ and $(2.8)$ should read as
$$
\int \eta^4 \left(|\dRic|^{2}+ \frac{1+3t}{3} R^{2}\right) \leq \mu\int \eta^4 R^2 +\mu^{-1}\int \eta^2(\eta')^2(H^+)^2+\mu^{-1}\int \eta^2(\eta'')^2
$$
and
$$
\int_{B(R)}\left(|\dRic|^{2}+ \frac{1+3t-3\mu}{3} R^{2}\right) \leq c_2 R^{-2}\int_{B(2R)}(H^+)^2+ c_3 R^{-\varepsilon},
$$
respectively. Therefore, if $t>-\frac13$ and $\mu$ is sufficiently small, following the proof we obtain that $(N,g)$ satisfies $$\dRic\equiv R\equiv 0,$$ and therefore must be flat, as required.

\smallskip

\item[(ii)] In the proof of \cite[Lemma 2.9 (ii)]{and2}, in our case, the condition $\Delta R(x_i)\to 0$ implies $|\dRic|(x_i)\to 0$ and also $R(x_i)\to 0$, since $t>-\frac13$. Therefore the proof can be completed following the same steps.
\end{itemize}

\end{proof}
\begin{rem}\label{remmdn} The result is true also if $t=-1/3$ (see \cite[Section 5.2]{and2}). Moreover, a similar result holds if $t=+\infty$, i.e. for critical solution of $\mathfrak{S}^2$, as it was shown in \cite{cat1}. In this case $(M^3,g)$ must be scalar flat.
\end{rem}

\begin{proof}[Proof of Theorem \ref{t-3dFt}] Suppose that $(M^{3},g)$ is a three-dimensional complete, non-flat, critical metric of $\mathfrak{F}^{2}_t$ with $-\frac13 < t <+\infty$. From Lemma \ref{l-blowup} we can construct another complete, non-flat, critical solution of $\mathfrak{F}^{2}_t$, $(\bar{M}^3, \bar{g})$, which has uniformly bounded curvature, i.e.
$$
|\ricc_{\bar{g}}|_{\bar{g}} \leq 1\quad\text{on } \bar{M}^3.
$$
In particular the Ricci curvature of $\bar{g}$ is bounded below. Therefore, we can apply the classical Omori-Yau maximum principle (see for instance \cite{yau2}) to the scalar curvature of ${\bar{g}}$ which is bounded below and satisfies the differential inequality
$$
(3+8t)\Delta_{\bar{g}} R_{\bar{g}} =-\left(|\dRic_{\bar{g}}|_{\bar{g}}^{2}+ \frac{1+3t}{3} R_{\bar{g}}^{2}\right) \leq -\frac{1+3t}{3}R_{\bar{g}}^2,
$$
obtaining
$$
\inf_{\bar{M}} R_{\bar{g}} \geq 0.
$$
Lemma \ref{l-rig} applied to $(\bar{M}^3, \bar{g})$ implies that $(\bar{M}^3, \bar{g})$ must be flat, a contradiction. Therefore the critical solution $(M^{3},g)$ is flat and Theorem \ref{t-3dFt} is proved.
\end{proof}

\

\subsection{Critical metrics of $\mathfrak{S}^2$}
The proofs of Theorem \ref{t-3d} and Remark \ref{r-3d} rely upon an integral estimate for critical metrics of $\mathfrak{S}^2$, that we now prove in general dimension $n$.
We recall that a complete Riemannian manifold $(M^{n},g)$ is critical for $\mathfrak{S}^2$ if it  satisfies ~\eqref{eq} and \eqref{eq1}, i. e.
$$
R \,\ricc - \nabla^{2} R \,=\, \frac{3}{4(n-1)} R^{2}\,g \,.
$$
and
$$
\Delta R \, = \, \frac{n-4}{4(n-1)} R^{2} \,.
$$

We have the following estimate:

\begin{lemma}\label{estimate} Let $(M^{n},g)$ be a complete critical metric of $\mathfrak{S}^2$. Assume  that there exists a point $O\in M^{n}$ such that $R(O)<0$ and,
for $s>0$,  define the open set
$$
M^{-}_{s} = \{ p\in M^{n}|\, R(p)<0 \} \cap B_{s} (O) \,,
$$
where $B_{s}(O)$ is the geodesic ball of radius $s$ centered in $O$. Then, for every $0<s_{1}<s_{2}$ and every $\a>-1$, the following estimate holds:
\begin{eqnarray*}
\int_{M^{-}_{s_{1}}}|\nabla R|^{2} |R|^{\a} \,dV_{g}\leq \frac{(n-4)}{2(n-1)(1+\a)} \int_{M^{-}_{s_{2}}}|R|^{\a+3}\,dV_{g} + \frac{4}{(1+\a)^{2}(s_{2}-s_{1})^{2}}\int_{M^{-}_{s_{2}}}|R|^{\a+2} \,dV_{g} \,.
\end{eqnarray*}

\end{lemma}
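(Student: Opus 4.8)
The plan is to prove this as a Caccioppoli-type (reverse Poincar\'e) estimate, using \emph{only} the traced Euler--Lagrange equation \eqref{eq1}, namely $\Delta R=\frac{n-4}{4(n-1)}R^2$. The idea is to test this scalar equation against $\eta^2 R|R|^{\a}$, where $\eta$ is a standard cutoff: fix $\eta\in C^\infty_c(B_{s_2}(O))$ with $0\le\eta\le1$, $\eta\equiv1$ on $B_{s_1}(O)$ and $|\nabla\eta|\le (s_2-s_1)^{-1}$. Since $M^{-}_{s_2}\subset\{R<0\}$, on this set one has $R|R|^{\a}=-|R|^{\a+1}$, and the test function vanishes continuously along $\{R=0\}$ precisely because $\a+1>0$; this is the first place where the hypothesis $\a>-1$ enters.

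First I would multiply \eqref{eq1} by $\eta^2 R|R|^{\a}$ and integrate over $M^{-}_{s_2}$ (equivalently over $M$, as the test function is supported there). Integrating by parts and using the pointwise identity $\nabla(R|R|^{\a})=(\a+1)|R|^{\a}\nabla R$, valid on $\{R<0\}$, the leading term becomes $(\a+1)\int\eta^2|R|^{\a}|\nabla R|^2$, while the right-hand side contributes $\frac{n-4}{4(n-1)}\int\eta^2|R|^{\a+3}$ (using $R^3|R|^{\a}=-|R|^{\a+3}$ where $R<0$). The only surviving piece is the cross term $2\int\eta\,R|R|^{\a}\,\nabla\eta\cdot\nabla R$, coming from differentiating the factor $\eta^2$.

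Next I would control the cross term by Young's inequality, bounding it by $2\int\eta|R|^{\a+1}|\nabla\eta||\nabla R|$ and splitting $|R|^{\a+1}=|R|^{\a/2}\,|R|^{\a/2+1}$ to obtain $\eps\int\eta^2|R|^{\a}|\nabla R|^2+\eps^{-1}\int|\nabla\eta|^2|R|^{\a+2}$. Choosing $\eps=(\a+1)/2$ absorbs exactly half of the gradient term into the left-hand side; dividing the surviving coefficient $(\a+1)/2$ through then yields $\int\eta^2|R|^{\a}|\nabla R|^2\le\frac{n-4}{2(n-1)(\a+1)}\int\eta^2|R|^{\a+3}+\frac{4}{(\a+1)^2}\int|\nabla\eta|^2|R|^{\a+2}$. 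Finally, using $\eta\equiv1$ on $B_{s_1}(O)$ to bound the left-hand side from below by the integral over $M^{-}_{s_1}$, and $|\nabla\eta|\le(s_2-s_1)^{-1}$ supported in $B_{s_2}(O)$ to bound the last integral, produces the claimed inequality with precisely the stated constants.

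The main technical obstacle is the rigorous justification of the integration by parts when $-1<\a<0$: there $|R|^{\a}$ is singular along the zero set $\{R=0\}$, so both the admissibility of the test function as an $H^1$ object and the vanishing of the boundary contribution along $\partial M^{-}_{s_2}$ require care. I would handle this by regularizing, replacing $|R|$ with $u_\delta=(R^2+\delta^2)^{1/2}$ (so that $R\,u_\delta^{\a}$ is smooth), carrying out the identical computation, and letting $\delta\to0$. The hypothesis $\a>-1$ is exactly what guarantees that the boundary terms vanish and that the integrals $\int_{M^{-}_{s_2}}|R|^{\a+3}$, $\int_{M^{-}_{s_2}}|R|^{\a+2}$ and $\int_{M^{-}_{s_1}}|\nabla R|^2|R|^{\a}$ are finite and stable under the limit: near a regular portion of $\{R=0\}$ one has $|R|\sim\operatorname{dist}$, and $\int\operatorname{dist}^{\a}$ converges precisely when $\a>-1$.
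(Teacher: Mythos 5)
Your proposal is correct and follows essentially the same route as the paper: test the traced equation \eqref{eq1} against $\eta^2 R|R|^{\a}$ on $M^{-}_{s_2}$, integrate by parts using $\nabla(R|R|^{\a})=(\a+1)|R|^{\a}\nabla R$ on $\{R<0\}$, and absorb the cross term by Young's inequality with $\eps=(1+\a)/2$, which reproduces the stated constants exactly. The only difference is that you justify the integration by parts for $-1<\a<0$ via the regularization $(R^2+\delta^2)^{1/2}$, whereas the paper simply asserts that the boundary terms along $\{R=0\}$ vanish; your extra care is welcome but does not change the argument.
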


\begin{proof} Let $\eta$ be a smooth cutoff function such that $\eta\equiv1$ on $B_{s_1}(O)$, $\eta\equiv0$ on $B_{s_2}^c(O)$, $0\leq\eta\leq1$ on $M^n$ and $|\nabla\eta|\leq\frac{c}{s_2-s_1}$, $c>0$ independent of $s_1,s_2$. Integrating by parts one obtains
\begin{eqnarray*}
\int_{M^{-}_{s_{2}}}|\nabla R|^{2} |R|^{\a} \eta^{2}\,dV_{g} &=& \int_{M^{-}_{s_{2}}} \langle \nabla R, \nabla R \rangle (-R)^{\a}\eta^{2} \,dV_{g} \\
&=& - \int_{M^{-}_{s_{2}}} R \Delta R (-R)^{\a}\eta^{2} \,dV_{g} -\a \int_{M^{-}_{s_{2}}} \langle \nabla R, \nabla R \rangle (-R)^{\a}\eta^{2} \,dV_{g}\\
&&+\, 2\int_{M^{-}_{s_{2}}}\langle \nabla R, \nabla \eta \rangle (-R)^{\a+1} \eta \,dV_{g}\,,
\end{eqnarray*}
since the boundary terms vanish. Thus, from equation~\eqref{eq1}, we get
\begin{eqnarray*}
\int_{M^{-}_{s_{2}}}|\nabla R|^{2} |R|^{\a} \eta^{2}\,dV_{g}
&=& - \frac{1}{1+\a}\int_{M^{-}_{s_{2}}} R \Delta R (-R)^{\a}\eta^{2} \,dV_{g} + \frac{2}{1+\a}\int_{M^{-}_{s_{2}}}\langle \nabla R, \nabla \eta \rangle |R|^{\a+1} \eta \,dV_{g} \\
&=& \frac{n-4}{4(n-1)(1+\a)} \int_{M^{-}_{s_{2}}}|R|^{\a+3}\eta^{2}\,dV_{g} + \frac{2}{1+\a}\int_{M^{-}_{s_{2}}}\langle \nabla R, \nabla \eta \rangle |R|^{\a+1} \eta \,dV_{g} \,.
\end{eqnarray*}
On the other hand, Schwartz inequality implies
$$
\frac{2}{1+\a}\int_{M^{-}_{s_{2}}}\langle \nabla R, \nabla \eta \rangle |R|^{\a+1} \eta \,dV_{g} \leq \frac{\eps}{1+\a} \int_{M^{-}_{s_{2}}}|\nabla R|^{2} |R|^{\a} \eta^{2}\,dV_{g} + \frac{1}{\eps(1+\a)}\int_{M^{-}_{s_{2}}} |R|^{\a+2} |\nabla \eta|^{2} \,dV_{g} \,,
$$
for every $\eps>0$. Choosing $\eps=(1+\a)/2$ we get the result.
\end{proof}

\begin{proof}[Proof of Theorem \ref{t-3d} and Remark \ref{r-3d}]
First of all, if $M^3$ is compact, then integrating \eqref{eq1} over $M^3$ we get $R\equiv 0$ on $M^3$.

\smallskip

On the other hand, suppose that $M^3$ is non-compact
and let $1<q<\infty$. Using Lemma~\ref{estimate} with $n=3$, $s_{2}=2s_{1}=2s>0$ and $\alpha=q-2>-1$ we get
$$
\int_{M^{-}_{s}}|\nabla R|^{2}|R|^{q-2} \,dV_{g} \leq \frac{4}{(q-1)^{2}s^{2}} \int_{M^{n}}|R|^{q} \,dV_{g} \longrightarrow 0\quad\text{as}\quad s\to +\infty.
$$
Hence we have that $(M^{3},g)$ has non-negative scalar curvature. It follows from~\cite[Theorem 1.2]{cat1} that $(M^{3},g)$ has to be scalar flat, and Theorem~\ref{t-3d} and Remark \ref{r-3d} in this case are proved.
\end{proof}

\begin{rem}\label{rem-not-And}
We explicitly note that the strategy used in the proof of Theorem \ref{t-3dFt} cannot be applied in the case of critical metrics of $\mathfrak{S}^2$. More precisely,  even if the rigidity result of Lemma \ref{l-rig} still holds (see Remark \ref{remmdn}), the local regularity of $L^{2,2}$-weak solutions given by Lemma \ref{l-regularity}  could be proved (possibly with some effort, due to the presence of the boundary set $\Sigma=\set{R=0}$), and thus also the blow-up argument in Lemma \ref{l-blowup} works, the contradiction in the proof of Theorem \ref{t-3dFt} cannot be achieved, since the solution $(\bar{M}^3, \bar{g})$ must only be scalar flat.
\end{rem}

\

\section{Higher dimensions}\label{sec5}

The aim of this section is to prove Theorem \ref{t-hd} and Remark \ref{r-hd}.

First of all, we show the following

\begin{lemma}\label{l-hdnonpos} Let $(M^{n},g)$, $n>4$, be a complete, non-compact, critical metric of $\mathfrak{S}^2$ with $R\in L^{q}(M^{n})$ for some $1<q<\infty$. Then $(M^{n},g)$ has non-positive scalar curvature.
\end{lemma}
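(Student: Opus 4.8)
The plan is to prove the pointwise bound $R\le 0$ by an integral test-function argument that runs \emph{exactly parallel} to the one in Lemma \ref{estimate}, but carried out on the region where $R$ is \emph{positive} rather than negative. The one genuinely new ingredient is a sign observation: for $n>4$ the constant $\frac{n-4}{4(n-1)}$ appearing in the traced Euler--Lagrange equation \eqref{eq1} is strictly positive, which is precisely what makes the cubic term fall on the favorable side of the estimate (the opposite of what happens in the three-dimensional setting, where $n-4<0$ and the cubic term is simply discarded).

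First I would fix a point $O\in M^n$ and, for $s>0$, set $M^+_s=\{R>0\}\cap B_s(O)$; if every such set is empty there is nothing to prove, so assume $M^+_s\neq\emptyset$ for $s$ large. Choosing a cutoff $\eta$ with $\eta\equiv 1$ on $B_{s_1}(O)$, $\eta\equiv 0$ outside $B_{s_2}(O)$, $0\le\eta\le 1$ and $|\nabla\eta|\le c/(s_2-s_1)$, I would multiply \eqref{eq1} by $R^{\alpha+1}\eta^2$, with $\alpha=q-2>-1$, and integrate by parts over $M^+_{s_2}$, exactly as in the proof of Lemma \ref{estimate}. Since the boundary terms vanish (both on $\partial B_{s_2}$, by the cutoff, and on $\{R=0\}$, where the weight degenerates), this yields the identity
$$
(1+\alpha)\int_{M^+_{s_2}}|\nabla R|^{2}R^{\alpha}\eta^{2}\,dV \;+\; \frac{n-4}{4(n-1)}\int_{M^+_{s_2}}R^{\alpha+3}\eta^{2}\,dV \;=\; -2\int_{M^+_{s_2}}R^{\alpha+1}\eta\,\langle\nabla R,\nabla\eta\rangle\,dV.
$$

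The crucial point is that, because $n>4$, \emph{both} terms on the left are now non-negative. I would then estimate the right-hand side by Young's inequality, $2R^{\alpha+1}\eta|\nabla R||\nabla\eta|\le \epsilon R^{\alpha}|\nabla R|^2\eta^2+\epsilon^{-1}R^{\alpha+2}|\nabla\eta|^2$, and absorb the gradient contribution into the left by choosing $\epsilon=1+\alpha$, which leaves
$$
\frac{n-4}{4(n-1)}\int_{M^+_{s_1}}R^{q+1}\,dV \;\le\; \frac{c^{2}}{(q-1)(s_2-s_1)^{2}}\int_{M^+_{s_2}}R^{q}\,dV.
$$
Taking $s_2=2s_1=2s$ and bounding the last integral by $\int_M (R^+)^q\,dV=\|R\|_{L^q(M^n)}^q<\infty$, the right-hand side is $O(s^{-2})$ and tends to $0$ as $s\to+\infty$, using completeness and non-compactness so that the balls exhaust $M^n$. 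Hence $\int_{\{R>0\}}R^{q+1}\,dV=0$, and since $R$ is smooth and $\{R>0\}$ is open, this forces $\{R>0\}=\emptyset$, i.e. $R\le 0$ on $M^n$, as claimed. (As a remark: combined with the strong maximum principle applied to the subharmonic function $R$, this will later upgrade to ``$R\equiv 0$ or $R<0$ strictly''.)

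The step I expect to require the most care is the integration by parts when $1<q<2$, i.e. $\alpha\in(-1,0)$: then $R^{\alpha}$ is singular along $\{R=0\}$ and the intermediate integrals are not obviously finite. I would make the argument rigorous by the standard device of replacing $R^{\alpha}$ with the bounded Lipschitz weight obtained by freezing it below a level $\epsilon>0$ (equivalently, performing the computation first over $\{R>\epsilon\}$, where all integrands are smooth), and then letting $\epsilon\to 0^+$ by monotone convergence; the extra boundary contribution generated by the truncation carries a favorable sign and drops out in the limit. Everything else is a routine repetition of the computation in Lemma \ref{estimate}, so the essential content of the proof is the positivity of $\frac{n-4}{4(n-1)}$, which is exactly what separates the case $n>4$ from $n=3$.
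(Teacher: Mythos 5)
Your proposal is correct and follows essentially the same route as the paper: multiply the traced equation \eqref{eq1} by $R^{q-1}\eta^{2}$ on $M^{+}_{s}$, integrate by parts, discard the gradient term (which has a favorable sign since $n>4$ and $q>1$), absorb the cross term by Young's inequality, and let $s\to+\infty$ using $R\in L^{q}$. Your extra care about the truncation near $\{R=0\}$ when $1<q<2$ is a sound (and slightly more scrupulous) handling of a point the paper passes over silently.
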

\begin{proof} We  prove this by contradiction. Assume that there exists a point $O\in M^{n}$ such that $R(O)>0$. For $s>0$, we define the open set
$$
M^{+}_{s} = \{ p\in M^{n}|\, R(p)>0 \} \cap B_{s} (O) \,,
$$
where $B_{s}(O)$ is the geodesic ball of radius $s$ centered in $O$. Let $\eta$ be a smooth cutoff function such that $\eta\equiv1$ on $B_s(O)$, $\eta\equiv0$ on $B_{2s}^c(O)$, $0\leq\eta\leq1$ on $M^n$ and $|\nabla\eta|\leq\frac{c}{s}$, $c>0$ independent of $s$. From equation~\eqref{eq1}, an integration by part and an application of Young's inequality yield

\begin{eqnarray*}
\int_{M^{+}_{2s}}R^{q+1} \eta^{2} \,dV_{g} &=& \frac{4(n-1)}{n-4} \int_{M^{+}_{2s}} R^{q-1} \Delta R \,\eta^{2} \,dV_{g} \\
&=& - \frac{4(n-1)(q-1)}{n-4}\int_{M^{+}_{2s}}|\nabla R|^{2} R^{q-2} \eta^{2} \,dV_{g} \\
&& - \frac{8(n-1)}{n-4} \int_{M^{+}_{2s}} \langle \nabla R, \nabla\eta \rangle R^{q-1} \eta \,dV_{g} \\
&\leq& \frac{4(n-1)}{(n-4)(q-1)}\int_{M^{+}_{2s}} R^{q} |\nabla \eta|^{2} \,dV_{g} \\
&\leq& \frac{4(n-1)}{(n-4)(q-1)s^{2}} \int_{M^{n}} |R|^{q} \,dV_{g} \,.
\end{eqnarray*}
By letting $s\rightarrow +\infty$, we get that the set $M^{+}= \{p\in M^{n}| R(p)>0\}$ must have zero measure, so it must be empty: this is a contradiction, since $O\in M^{+}$.

\end{proof}
Now, if $n>4$, then by \eqref{eq1} $R$ is subharmonic, therefore Lemma \ref{l-hdnonpos} and the strong maximum principle imply the following:
\begin{cor}\label{c-strong} Let $(M^{n},g)$, $n>4$, be a complete, non-compact critical metric of $\mathfrak{S}^2$ with $R\in L^{q}(M^{n})$ for some $1<q<\infty$. Then $(M^{n},g)$ is either scalar flat or it has negative scalar curvature.
\end{cor}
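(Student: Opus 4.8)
The plan is to derive the stated dichotomy as an essentially immediate consequence of Lemma~\ref{l-hdnonpos}, by applying E.~Hopf's strong maximum principle to the scalar curvature regarded as a subharmonic function. The only ingredients needed are the sign information $R\leq 0$ already produced by the preceding lemma and the subharmonicity built into the trace equation~\eqref{eq1}.

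First I would recall that under the present hypotheses Lemma~\ref{l-hdnonpos} yields $R\leq 0$ on all of $M^n$. Next I would read off subharmonicity from the traced Euler--Lagrange equation~\eqref{eq1},
\begin{equation*}
\Delta R=\frac{n-4}{4(n-1)}R^2,
\end{equation*}
noting that for $n>4$ the coefficient on the right is strictly positive, so that $\Delta R\geq 0$ everywhere; thus $R$ is subharmonic. Since a critical metric of $\mathfrak{S}^2$ is smooth, $R$ is of class $C^2$ and $\Delta$ is a locally uniformly elliptic operator with smooth coefficients, so Hopf's strong maximum principle is available.

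The conclusion then follows by a simple case distinction. If there is a point $p\in M^n$ with $R(p)=0$, then, because $R\leq 0$ throughout, $R$ attains an interior global maximum at $p$; the strong maximum principle forces $R$ to be locally constant near $p$, and connectedness of $M^n$ upgrades this to $R\equiv 0$, so $(M^n,g)$ is scalar flat. If instead $R$ never vanishes, then $R<0$ on all of $M^n$ and the manifold has strictly negative scalar curvature. These are precisely the two alternatives in the statement.

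Since the substantive analytic work---establishing $R\leq 0$ from the integrability assumption $R\in L^q$ via the test-function argument---has already been carried out in Lemma~\ref{l-hdnonpos}, I do not expect any genuine obstacle here. The only point demanding (minimal) care is verifying the applicability of the strong maximum principle, namely that $R\in C^2$, guaranteed by smoothness of the critical metric, and that $\Delta$ is elliptic with regular coefficients, which holds on any smooth Riemannian manifold; the global step then rests merely on connectedness of $M^n$.
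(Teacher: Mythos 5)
Your argument is correct and coincides with the paper's own reasoning: the corollary is deduced exactly as you describe, from the sign conclusion $R\leq 0$ of Lemma~\ref{l-hdnonpos} together with the subharmonicity $\Delta R=\frac{n-4}{4(n-1)}R^2\geq 0$ given by~\eqref{eq1} for $n>4$, followed by the strong maximum principle and connectedness. No gaps; the level of care you note about applicability of the maximum principle is appropriate and matches the paper's (implicit) treatment.
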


From now on we will assume that $(M^{n},g)$, $n>4$, is a complete, non-compact, critical metric of $\mathfrak{S}^2$ with $R\in L^{q}(M^{n})$ for some $1<q<\infty$ and with negative, bounded from below scalar curvature. Let $f=-\log(-R)$; one has
$$
\nabla f = -\frac{\nabla R}{R} \quad\quad\hbox{and}\quad\quad \nabla^{2} f = -\frac{\nabla^{2}R}{R} + df \otimes df \,.
$$
Hence, the structure equation~\eqref{eq} implies that the $1$--Bakry--Emery Ricci tensor with potential function $f$, i.e. $\Ric^{1}_{f} = \Ric + \nabla^{2} f - df \otimes df$ (see e.g. \cite{WW}), satisfies
\begin{equation}\label{eqf}
\Ric^{1}_{f}  = - \frac{3}{4(n-1)} \,e^{-f}\,g \,.
\end{equation}
We explicitly note that the (global) change of variable $f=-\log(-R)$, which is permitted by Corollary \ref{c-strong}, allows to read a critical metric as a special Einstein-type manifold, in the sense of \cite{CMMR} (see also \cite{CMbook}). We now aim at proving gradient estimates for the function $f$, which satisfies the semilinear equation for the $f$-Laplacian
\begin{equation}\label{eqft}
\Delta_{f}f = \Delta f - \abs{\nabla f}^2= \frac{n-4}{4(n-1)} e^{-f} \,
\end{equation}
that one obtains by tracing \eqref{eqf}. It is well known that this kind of results for the standard, ``unweighted'' Laplacian can be obtained only assuming a lower bound on the Ricci tensor, which ensure, in particular, the validity of the Laplacian comparison and of the Omori-Yau maximum principle.
Since $R$ is bounded from below (or, equivalently, $e^{-f}$ is bounded from above), we see that the ``weighted'' $\Ric^{1}_{f}$ tensor is bounded from below: this will allows us to obtain the following Lemmas \ref{gradient estimate} and \ref{lemma5}.

\begin{lemma}\label{gradient estimate}
Let $(M^{n},g)$, $n>4$, be a complete, non-compact, critical metric of $\mathfrak{S}^2$ with negative, bounded from below scalar curvature. Then there exists a positive constant $C$, only depending on $n$ and the lower bound of the scalar curvature, such that, on $M^{n}$, there holds
\begin{equation}\label{12}
  |\nabla R|^{2} \leq C \, R^{2} \,.
\end{equation}
\end{lemma}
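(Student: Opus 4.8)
The plan is to pass to the function $f=-\log(-R)$, which is globally well defined under the standing assumption $R<0$ (the only alternative, $R\equiv 0$, makes \eqref{12} trivial, cf. Corollary \ref{c-strong}); since $\nabla f=-\nabla R/R$, the sought estimate \eqref{12} is equivalent to the uniform bound $\abs{\nabla f}^2\le C$. The starting point is the weighted Bochner formula for the drift Laplacian $\Delta_{f}=\Delta-\pair{\nabla f,\nabla\,\cdot\,}$ applied to $f$ itself,
$$\tfrac12\Delta_{f}\abs{\nabla f}^2=\abs{\nabla^2 f}^2+\pair{\nabla f,\nabla\Delta_{f} f}+(\Ric+\nabla^2 f)(\nabla f,\nabla f).$$
I would then insert the structural identities \eqref{eqf} and \eqref{eqft}. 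From $\Delta_{f}f=\frac{n-4}{4(n-1)}e^{-f}$ one gets $\pair{\nabla f,\nabla\Delta_{f}f}=-\frac{n-4}{4(n-1)}e^{-f}\abs{\nabla f}^2$, while writing $\Ric+\nabla^2 f=\Ric^{1}_{f}+df\otimes df$ and using \eqref{eqf} gives $(\Ric+\nabla^2 f)(\nabla f,\nabla f)=-\frac{3}{4(n-1)}e^{-f}\abs{\nabla f}^2+\abs{\nabla f}^4$. Setting $u=\abs{\nabla f}^2$ and exploiting the clean cancellation $\frac{3}{4(n-1)}+\frac{n-4}{4(n-1)}=\frac14$, this yields
$$\tfrac12\Delta_{f}u=\abs{\nabla^2 f}^2-\tfrac14 e^{-f}u+u^2\ \ge\ u^2-\tfrac14 e^{-f}u.$$

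Since $R$ is bounded from below, $e^{-f}=-R\le A_0:=\sup_M(-R)<\infty$, so the tensor $\Ric^{1}_{f}=-\frac{3}{4(n-1)}e^{-f}g$ is bounded below. This is precisely the hypothesis needed for the $f$-Laplacian comparison theorem available for the $1$-Bakry--Emery tensor (see \cite{WW} and \cite{CMMR}), which in turn guarantees the validity of the Omori--Yau maximum principle for $\Delta_{f}$ on $(M^n,g)$. Granting this, I would apply the principle to $u$: along an Omori--Yau sequence $\set{x_k}$ realizing $\sup_M u$ one has $\Delta_{f}u(x_k)\le 1/k$, so, discarding the nonnegative term $\abs{\nabla^2 f}^2$, the inequality above forces $u(x_k)^2\le\frac14 e^{-f(x_k)}u(x_k)+o(1)\le\frac14 A_0\,u(x_k)+o(1)$, whence in the limit $\sup_M u\le\frac14 A_0$. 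Translating back, $\abs{\nabla R}^2\le\frac14\big(\sup_M(-R)\big)R^2$, which is \eqref{12} with $C$ depending only on $n$ and $\inf_M R$.

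The main obstacle is that, on a complete non-compact manifold, $u=\abs{\nabla f}^2$ is not known a priori to be bounded above, so the Omori--Yau principle cannot be invoked verbatim on $u$. The delicate point is therefore the passage from the pointwise Bochner inequality to the global bound: one must either apply the principle to a bounded increasing transform $\psi(u)$ and absorb the extra gradient terms arising from $\Delta_{f}\psi$ (here retaining $\abs{\nabla^2 f}^2$ and using Kato's inequality $\abs{\nabla^2 f}^2\ge\abs{\nabla u}^2/(4u)$ becomes essential to control them), or else exploit the superlinearity of the right-hand side $u^2$ together with the weighted volume growth coming from the comparison theorem to rule out $\sup_M u=+\infty$ by a Keller--Osserman type argument. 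Once boundedness of $u$ is secured by such means, the sharp constant and the final estimate follow immediately from the displayed inequality as above.
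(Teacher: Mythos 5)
Your Bochner computation is correct and is essentially the paper's: with $f=-\log(-R)$ and $u=\abs{\nabla f}^2$ one indeed gets $\tfrac12\Delta_f u\ge u^2-\tfrac14 e^{-f}u$ (the paper retains the Hessian term via Newton's inequality and obtains the slightly better coefficient $\tfrac{n+1}{n}$ in front of $u^2$, which is immaterial here). The genuine gap is exactly the one you flag yourself and then leave open: the Omori--Yau principle for $\Delta_f$ applies to functions that are bounded above (resp.\ below, for infima), and $u$ is not known to be bounded --- indeed, boundedness of $u$ \emph{is} the content of the lemma, so invoking the principle on $u$ directly is circular. Neither of your two suggested repairs is carried out: the route via a bounded transform $\psi(u)$ requires you to actually exhibit a $\psi$ for which the extra first-order terms produced by $\Delta_f\psi(u)$ are absorbed using $\abs{\nabla^2 f}^2$ and Kato's inequality, and the Keller--Osserman route requires weighted volume growth estimates that you have not established. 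As written, the proof stops precisely at its decisive step.

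The paper closes this gap not with Omori--Yau but with the Cheng--Yau/Calabi localization: one considers $F=(s^2-\rho^2)^2\abs{\nabla f}^2$ on the geodesic ball $B_s(O)$, which vanishes on $\partial B_s(O)$ and hence attains an interior maximum at some $x_0$; the conditions $\nabla F(x_0)=0$ and $\Delta_f F(x_0)\le 0$, combined with the $f$-Laplacian comparison $\Delta_f\rho^2\le 2\left[(n+1)+nY\rho\right]$ (valid because $\Ric^1_f$ is bounded below, as you correctly observe), convert the pointwise Bochner inequality into $\abs{\nabla f}^2\le \tfrac{c_1}{s^2}+\big(\tfrac{c_2}{s}+c_3Z_s\big)Z_s$ on $B_{s/2}$, where $Z_s^2=\sup_{B_s}e^{-f}$ is controlled by the lower bound on $R$; letting $s\to\infty$ gives the global estimate. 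This requires no a priori boundedness of $u$. (The paper does use the Omori--Yau principle for $\Delta_f$ in the subsequent Lemma \ref{lemma5}, but only after the present lemma has secured that $\abs{\nabla f}^2$ is bounded, so that the relevant auxiliary function is bounded below.) To complete your argument you would need either to run this localization or to genuinely execute one of your two sketched alternatives.
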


\begin{proof} Let $O\in M^{n}$ be some origin point, denote by $B_{s}$ the geodesic ball with radius $s>0$ centered in $O$ and let
$$
Z_{s}^{2}:= \sup_{B_{s}} e^{-f}\,.
$$
We will show that there exist three positive constants $c_{1}, c_{2}, c_{3}$, just depending on $n$, such that
\begin{equation}\label{eqclaim}
|\nabla f|^{2} (p) \,\leq \frac{c_{1}}{s^{2}} + \Big(\frac{c_{2}}{s} + c_{3} Z_s\Big) Z_{s} \,,
\end{equation}
for every $s>0$ and every $p \in B_{s/2}$. The global statement will follow by letting $s\rightarrow \infty$. Using~\eqref{eqf},~\eqref{eqft} and the Bochner formula applied to $f$ we get
$$
\frac{1}{2} \Delta_f |\nabla f|^2 = |\nabla^2f|^2 + g( \nabla \Delta_f f, \nabla f) + \Ric_f^1(\nabla f, \nabla f) +  |\nabla f|^4.
$$
Now we use Newton inequalities and $\Delta f = \Delta_ff + |\nabla f|^2$ to obtain
\begin{align*}
\frac {1}{2} \Delta_f |\nabla f|^2 &= |\nabla^2f|^2 -   \frac{n-4}{4(n-1)} e^{-f}|\nabla f|^2   - \frac{3}{4(n-1)} \,e^{-f}|\nabla f|^2 + |\nabla f|^4 \\
&=|\nabla^2f|^2 -\frac{1}{4} e^{-f}|\nabla f|^2  + |\nabla f|^4 \\
&\geq \frac {1}{n}(\Delta f)^2 -\frac{1}{4} e^{-f}|\nabla f|^2+|\nabla f|^4 \\
&\geq \big(1+\frac {1}{n}\big) |\nabla f|^4 - \frac{1}{4} e^{-f}|\nabla f|^2,
\end{align*}
and then we deduce
\begin{equation}\label{estimDeltaffabs}
\Delta_f |\nabla f|^2 \geq \frac{2n+2}{n} |\nabla f|^4-\frac{1}{2}e^{-f}|\nabla f|^2.
\end{equation}
We note that, on $B_s(O)$,
      \[
      \Ric^{1}_{f} - \frac{3}{4(n-1)} \,e^{f}\,g \geq - \frac{3}{4(n-1)}Z_{s}^{2} g \,.
      \]
Now we proceed exactly as in Theorem 4 on \cite{mas_rim}. We include here the details for the sake of completeness. Let  $\rho(x) := dist(O, x)$: using the Calabi trick (\cite{calabi} and \cite{cat1} for details) we can suppose that $\rho$ is smooth and consider on $B_s(O)$ the function
      \begin{equation}
        F(x) = \left[s^2 - \rho^2(x)\right]^2 |\nabla f|^2.
      \end{equation}
      If $|\nabla f| \equiv 0$ we have nothing to prove; if $|\nabla f| \not \equiv 0$, since $F \geq 0$ and  $\left. F \right|_{\partial B(O, s)} \equiv 0$, there exists a point $x_0 \in B(O,s)$ such that $F(x_0) = \underset{\overline{B_s(O)}}\max F(x) > 0$. At $x_0$ we then have
      \begin{equation}\label{nablaFoverF}
      \nabla F(x_0) = 0,
      \end{equation}
      \begin{equation}\label{deltafFoverF}
       \Delta_f F(x_0) \leq 0.
      \end{equation}
A straightforward calculation shows that \eqref{nablaFoverF} is equivalent to
      \begin{equation}\label{nablaFoverFtranslated}
      \frac{\nabla |\nabla f|^2}{|\nabla f|^2}= \frac{2\nabla \rho^2}{s^2-\rho^2} \qquad \text{ at } x_0,
      \end{equation}
      while, using \eqref{nablaFoverFtranslated} condition \eqref{deltafFoverF} is equivalent to
       \begin{equation}\label{deltafFoverFtranslated}
       0 \geq -2\frac{\Delta_f \rho^2}{s^2-\rho^2} + \frac{\Delta_f|\nabla f|^2}{|\nabla f|^2} -24 \frac{\rho^2}{(s^2-\rho^2)^2} \quad \text{ at } x_0.
      \end{equation}
       From the $f$-Laplacian comparison theorem (see \cite{qian}, \cite{WW}), on $ B_s(O)$ we have
      \begin{equation}\label{fLaplcomparisonZ}
        \Delta_f \rho^2 \leq 2\left[(n+1) + nY\rho\right],
      \end{equation}
      where $Y^2 := \frac{3}{4n(n-1)}Z_{s}^{2}$.
      Combining \eqref{estimDeltaffabs}, \eqref{deltafFoverFtranslated} and \eqref{fLaplcomparisonZ} we find, at $x_0$,
      \begin{equation*}
        0 \geq -4 \frac{\left[(n+1) + nY\rho\right]}{s^2-\rho^2} + \frac{2n+2}{n}|\nabla f|^2 - \frac{1}{2} e^{-f}-24 \frac{\rho^2}{(s^2-\rho^2)^2},
      \end{equation*}
      which implies, multiplying through by $(s^2-\rho^2)^2$, that at $x_0$ we have
      \begin{equation}
        0 \geq -4 \left[(n+1) + nY\rho\right](s^2-\rho^2) + \frac{2n+2}{n} F -\frac{1}{2}(s^2-\rho^2)^{2} e^{-f}-24 \rho^2.
      \end{equation}
From this, claim~\eqref{eqclaim} follows and this concludes the proof of the lemma.
\end{proof}

Now we aim at improving the gradient estimate \eqref{12}, and to obtain that under the same assumptions of Lemma \ref{gradient estimate} one has
\begin{equation}\label{13}
  |\nabla R|^{2} \leq C \, |R|^{3}\quad\text{ on }M^{n}
\end{equation}
for some positive constant $C$, only depending on $n$. We start with the following

\begin{lemma}\label{l-hessest}
Let $(M^n,g)$ be $n$-dimensional Riemannian manifold and let $w\in C^2(M^n)$. Then, where $|\nabla w|\neq 0$, it holds
$$
|\nabla^2 w|^2\geq \frac{1}{n-1}(\Delta w)^2-\frac{1}{n-1}\frac{\Delta w}{|\nabla w|^2}\langle\nabla|\nabla w|^2,\nabla w\rangle.
$$
\end{lemma}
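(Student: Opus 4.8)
The plan is to argue pointwise at a point $p$ where $\nabla w(p)\neq0$, choosing a $g$-orthonormal frame $\{e_1,\dots,e_n\}$ adapted to the gradient, namely with $e_1=\nabla w/|\nabla w|$. Writing $h_{ij}=\nabla^2w(e_i,e_j)$ for the components of the Hessian in this frame, I would first record the elementary identity $\langle\nabla|\nabla w|^2,\nabla w\rangle=2\,\nabla^2w(\nabla w,\nabla w)$, which follows from $\nabla_i|\nabla w|^2=2\,w_{ki}w_k$. Dividing by $|\nabla w|^2$ and using $e_1=\nabla w/|\nabla w|$, this says precisely that
$$
\frac{1}{|\nabla w|^2}\langle\nabla|\nabla w|^2,\nabla w\rangle=2\,h_{11}.
$$
Hence, in the adapted frame, the claimed inequality is equivalent to $|\nabla^2w|^2\geq\frac{1}{n-1}(\Delta w)^2-\frac{2}{n-1}(\Delta w)\,h_{11}$.

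Next I would discard the off-diagonal contributions, keeping only the diagonal ones: since $|\nabla^2w|^2=\sum_{i,j}h_{ij}^2$, one has $|\nabla^2w|^2\geq\sum_i h_{ii}^2=h_{11}^2+\sum_{i=2}^n h_{ii}^2$. Setting $a=h_{11}$ and $b=\sum_{i=2}^n h_{ii}$, so that $\Delta w=a+b$, the Cauchy–Schwarz inequality applied to the $n-1$ diagonal entries $h_{22},\dots,h_{nn}$ yields $\sum_{i=2}^n h_{ii}^2\geq\frac{1}{n-1}b^2$. Therefore
$$
|\nabla^2w|^2\geq a^2+\frac{1}{n-1}b^2=a^2+\frac{1}{n-1}(\Delta w-a)^2.
$$

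Finally I would expand the right-hand side, obtaining by a direct computation
$$
a^2+\frac{1}{n-1}(\Delta w-a)^2=\frac{1}{n-1}(\Delta w)^2-\frac{2}{n-1}(\Delta w)\,a+\frac{n}{n-1}\,a^2,
$$
and, since $\frac{n}{n-1}a^2\geq0$, dropping the last term gives exactly the desired estimate. There is no genuine analytic obstacle here: the whole content is the choice of the gradient-adapted frame together with the observation that the correction term is twice the Hessian evaluated on the unit gradient direction, after which the statement is just the refinement of the standard Newton inequality $|\nabla^2w|^2\geq\frac1n(\Delta w)^2$ to the $(n-1)$-dimensional trace on the orthogonal complement of $\nabla w$. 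The only point meriting care is that the argument is purely local and valid only where $|\nabla w|\neq0$, which is precisely the hypothesis ensuring the frame $e_1=\nabla w/|\nabla w|$ is well defined.
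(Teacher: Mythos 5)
Your proof is correct and follows essentially the same route as the paper: an orthonormal frame with $e_1=\nabla w/|\nabla w|$, Cauchy--Schwarz on the diagonal Hessian entries orthogonal to the gradient, and the identity $\nabla^2w(e_1,e_1)=\tfrac12|\nabla w|^{-2}\langle\nabla|\nabla w|^2,\nabla w\rangle$. The only (immaterial) difference is that you retain the $h_{11}^2$ contribution one step longer before discarding it, whereas the paper drops it at the outset.
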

\begin{proof} Let $\{e_{1},\ldots,e_{n}\}$ be a local orthonormal frame with $e_{1}=|\nabla w|^{-1}\nabla w$. Then
\begin{align*}
|\nabla^{2} w|^{2} \geq&\, \sum_{i=2}^{n}(\nabla_{ii} w)^{2}\geq \frac{1}{n-1}\left( \sum_{i=2}^{n}\nabla_{ii} w \right)^{2} = \frac{1}{n-1}\left(\Delta w -\nabla_{11}w\right)^{2} \\
\geq& \frac{1}{n-1} (\Delta w)^{2}-\frac{2}{n-1}\Delta w \nabla_{11} w \,.
\end{align*}
Now, noting that
$$
\nabla_{11} w = \nabla^{2} w (e_{1},e_{1}) = \frac{1}{2} |\nabla w|^{-2} \pair{\nabla |\nabla w|^{2},\nabla w} \,,
$$
we obtain the desired estimate.

\end{proof}

We can now prove the following
\begin{lemma}\label{lemma5}
Let $(M^{n},g)$, $n>4$, be a complete, non-compact, critical metric of $\mathfrak{S}^2$ with negative, bounded from below scalar curvature. There exists a positive constant $C$, only depending on $n$, such that, on $M^{n}$, there holds
\begin{equation}\label{18}
  |\nabla R|^{2} \leq C \, |R|^{3} \,.
\end{equation}
\end{lemma}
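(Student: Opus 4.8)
The plan is to recast the desired bound \eqref{18} in terms of the function $f=-\log(-R)$ introduced above. Since $|\nabla f|^2=|\nabla R|^2/R^2$ and $e^{-f}=|R|$, estimate \eqref{18} is equivalent to the sharper gradient bound
$$|\nabla f|^2\leq C\,e^{-f}\qquad\text{on }M^n,$$
which improves Lemma \ref{gradient estimate} (that is, $|\nabla f|^2\leq C$) precisely by the extra factor $e^{-f}=|R|$. The strategy is to rerun a Bochner--maximum principle argument as in the proof of Lemma \ref{gradient estimate}, with two modifications: first, I would replace the crude Newton inequality $|\nabla^2 f|^2\geq\frac1n(\Delta f)^2$ used there by the sharper estimate of Lemma \ref{l-hessest}; second, I would apply the maximum principle to the quotient $|\nabla f|^2 e^{f}$ (equivalently $|\nabla R|^2|R|^{-3}$) rather than to $|\nabla f|^2$ itself, so that the weight $e^{-f}$ is built into the test function.

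Concretely, I would start from the weighted Bochner identity already derived in the proof of Lemma \ref{gradient estimate},
$$\tfrac12\Delta_f|\nabla f|^2=|\nabla^2 f|^2-\tfrac14 e^{-f}|\nabla f|^2+|\nabla f|^4,$$
and insert Lemma \ref{l-hessest} with $w=f$, together with $\Delta f=\Delta_f f+|\nabla f|^2=a\,e^{-f}+|\nabla f|^2$, where $a=\frac{n-4}{4(n-1)}>0$. This produces a differential inequality for $|\nabla f|^2$ whose only delicate term is the correction
$$-\frac{1}{n-1}\frac{\Delta f}{|\nabla f|^2}\langle\nabla|\nabla f|^2,\nabla f\rangle$$
coming from Lemma \ref{l-hessest}. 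The whole point of using Lemma \ref{l-hessest} in place of Newton's inequality is that this term becomes favorable once it is evaluated at a maximum of the correct quantity.

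For the maximum principle step I would follow the localized scheme of Lemma \ref{gradient estimate}: using the Calabi trick and the $f$-Laplacian comparison \eqref{fLaplcomparisonZ}, I would maximize $F=[s^2-\rho^2]^2|\nabla f|^2 e^{f}$ over $B_s(O)$ --- note that $F$ is continuous and vanishes on $\partial B_s$, since $R$ is continuous and strictly negative on the compact ball $\overline{B_s}$ --- and exploit $\nabla F=0$, $\Delta_f F\leq 0$ at an interior maximum $x_0$. (Equivalently, one may apply the Omori--Yau maximum principle for $\Delta_f$, valid because $\Ric^{1}_{f}$ is bounded below, to the bounded regularization $|\nabla f|^2/(e^{-f}+\eps)$ and let $\eps\to0$.) The first-order condition forces $\nabla|\nabla f|^2=-|\nabla f|^2\nabla f$ at $x_0$ up to controlled cutoff terms; substituting this into the correction term above turns it into a nonnegative multiple of $|\nabla f|^4$, so that the coefficient of the leading term $|\nabla f|^4$ stays strictly positive (one finds $\tfrac{n+1}{n-1}$). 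Absorbing the lower-order terms $e^{-f}|\nabla f|^2$ and $e^{-2f}$ and letting $s\to\infty$ then yields $|\nabla f|^2\leq C e^{-f}$, which is \eqref{18}.

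The main obstacle is exactly the sign and size of the cross term $\langle\nabla|\nabla f|^2,\nabla f\rangle$: the plain Newton inequality used in Lemma \ref{gradient estimate} discards the information needed here, and only the refined Hessian bound of Lemma \ref{l-hessest}, combined with the first-order condition at the maximum, makes this term work in our favor rather than against us. A secondary technical point is that the cutoff (or the $\eps$-regularization) generates error terms which must be shown to vanish in the limit; this is where the already established coarse bound $|\nabla f|^2\leq C$ of Lemma \ref{gradient estimate} is used, to keep those errors under control.
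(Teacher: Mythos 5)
Your high-level strategy --- the Bochner formula for $\Delta_f$, the refined Hessian estimate of Lemma \ref{l-hessest} in place of Newton's inequality, and a maximum-principle argument in which the first-order condition turns the cross term $\langle\nabla|\nabla f|^2,\nabla f\rangle$ into a favorable multiple of $|\nabla f|^4$ --- is exactly the one the paper follows. The gap is in the localization step. If you maximize $F=[s^2-\rho^2]^2|\nabla f|^2e^{f}$ over $B_s(O)$, then $\Delta_fF(x_0)\leq0$ together with the Bochner estimate yields an inequality of the shape
$$\frac{n+3}{n-1}\,|\nabla f|^2(x_0)\;\leq\;\kappa\, e^{-f(x_0)}\;+\;E_s(x_0),$$
where $E_s$ collects the cutoff contributions ($|\nabla\phi^2|^2/\phi^4$, $\Delta_f\phi^2/\phi^2$, and the $\langle\nabla\rho^2,\nabla f\rangle/\phi$ term produced by the first-order condition). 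To convert this into a bound on the quotient $G=|\nabla f|^2e^{f}$ you must multiply through by $e^{f(x_0)}=1/|R(x_0)|$, and this factor also multiplies $E_s(x_0)$. The coarse bound $|\nabla f|^2\leq C$ of Lemma \ref{gradient estimate} and the upper bound on $|R|$ control the numerators of $E_s$, but nothing available at this stage controls $1/|R(x_0)|$ from above: the maximum point $x_0=x_0(s)$ may escape to infinity, where $|R|$ could a priori decay arbitrarily fast. (The quadratic lower bound $|R|\geq c_1/(c_2+d^2)$ is Corollary \ref{c-lower}, which is deduced \emph{from} this lemma, so invoking it here would be circular.) Hence ``letting $s\to\infty$'' does not kill the error terms and the localized quotient argument does not close.

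The paper circumvents precisely this by applying the \emph{global} Omori--Yau principle for $\Delta_f$ to the difference $v=Au-|\nabla f|^2$ (with $u=-R$) rather than to a quotient: $v$ is bounded below thanks to Lemma \ref{gradient estimate}, so no cutoff and no division by $u$ are ever needed. The price is that proving $\inf v\geq0$ requires a case analysis along the Omori--Yau sequence ($|\nabla f|(y_m)\to l=0$ versus $l>0$) and, in the latter case, a discriminant computation showing that for $A$ in an explicit interval the resulting quadratic form in $u/|\nabla f|^2$ is uniformly negative. This replaces the single ``absorption'' step you describe, which on its own is not enough: the form $\frac{n+1}{n-1}|\nabla f|^4-\kappa'e^{-f}|\nabla f|^2+\frac{a^2}{n-1}e^{-2f}$ one gets from your estimate is not positive semidefinite for $n>4$ (check $n=5$). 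Your parenthetical alternative --- global Omori--Yau applied to $|\nabla f|^2/(e^{-f}+\eps)$ --- is essentially a reformulation of the paper's argument and does avoid the cutoff problem, but you would still have to handle the degenerate regime $e^{-f}\to0$ along the extremizing sequence, which is exactly what the paper's choice of $A$ and its two-case analysis accomplish; as written, your proposal leaves this open.
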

\begin{proof}
  Let again $f=-\log(-R)$ and let $u=-R=e^{-f}$. By the Bochner formula, using \eqref{eqf}, \eqref{eqft} and Lemma \ref{l-hessest}, we see that at those points of $M^n$ where $|\nabla f|\neq0$ we have
\begin{align*}
  \frac{1}{2} \Delta_f |\nabla f|^2 &= |\nabla^2f|^2 + \langle \nabla \Delta_f f, \nabla f\rangle + \Ric_f^1(\nabla f, \nabla f) +  |\nabla f|^4\\
   & \geq \frac{1}{n-1}(\Delta f)^2-\frac{1}{n-1}\frac{\Delta f}{|\nabla f|^2}\langle\nabla|\nabla f|^2,\nabla f\rangle-\frac{3}{4(n-1)}u|\nabla f|^2\\
   &\,\,\,\,\,\,\,+|\nabla f|^4-\frac{n-4}{4(n-1)}u|\nabla f|^2\\
   &=\frac{1}{n-1}\left(\frac{n-4}{4(n-1)}u+|\nabla f|^2\right)^2-\frac{1}{n-1}\frac{\Delta f}{|\nabla f|^2}\langle\nabla|\nabla f|^2,\nabla f\rangle\\
   &\,\,\,\,\,\,\,+|\nabla f|^4-\frac{1}{4}u|\nabla f|^2\\
   &=-\frac{1}{n-1}\frac{\Delta f}{|\nabla f|^2}\langle\nabla|\nabla f|^2,\nabla f\rangle+\frac{(n-4)^2}{16(n-1)^3}u^2+\frac{n}{n-1}|\nabla f|^4\\
   &\,\,\,\,\,\,\,-\frac{n^2-4n+9}{4(n-1)^2}u|\nabla f|^2\\
\end{align*}
Moreover
$$ \frac{1}{2} \Delta_f u=-\frac{n-4}{8(n-1)}u^2+\frac{1}{2}u|\nabla f|^2.$$
Now let
$$v:=Au-|\nabla f|^2,$$ where $A>0$ is a suitable constant to be chosen later. We claim that we can find $A>0$ such that $v\geq0$ on $M$.

By the above calculations we have
\begin{align}
 \label{14} \frac{1}{2} \Delta_f v &\leq -\frac{n-4}{8(n-1)}Au^2+\frac{A}{2}u|\nabla f|^2+\frac{1}{n-1}\frac{\Delta f}{|\nabla f|^2}\langle\nabla|\nabla f|^2,\nabla f\rangle\\
 \nonumber &\,\,\,\,\,\,\,-\frac{(n-4)^2}{16(n-1)^3}u^2-\frac{n}{n-1}|\nabla f|^4+\frac{n^2-4n+9}{4(n-1)^2}u|\nabla f|^2
\end{align}
By Lemma \ref{gradient estimate} we have that $v$ is bounded below. Since $u$ is bounded, by \eqref{eqf} we have that the $1$--Bakry--Emery Ricci curvature, $\Ric^{1}_{f} $, is bounded below and hence the full Omori--Yau maximum principle holds for the operator $\Delta_f$, see e.g. \cite{mas_rim} and references therein. Thus, there exists a sequence of points $\{x_k\}_k\subset M^n$ such that
\begin{equation*}
v(x_k)\leq\inf v+\frac{1}{k},\qquad |\nabla v(x_k)|\leq\frac{1}{k},\qquad\Delta_fv(x_k)\geq-\frac{1}{k},\qquad \text{ for all }k\in\mathbb{N}.
\end{equation*}

Since by Lemma \ref{gradient estimate} we have that $|\nabla f|$ is bounded, up to extracting a subsequence $\{y_m\}_m\subset\{x_k\}_k$ we can assume that $$\lim_{m\rightarrow+\infty}|\nabla f(y_m)|=l,$$
with
\begin{equation}\label{17}
v(y_m)\leq\inf v+o(1),\qquad |\nabla v(y_m)|\leq o(1),\qquad\Delta_fv(y_m)\geq-o(1),\qquad \text{ and } o(1)>0
\end{equation}
as $m$ tends to $\infty$.

Then one has
$$Au(y_m)-|\nabla f(y_m)|^2=v(y_m)\leq\inf v+o(1)$$
as $m$ tends to $+\infty$. Since $u$ is positive on $M^n$, if $l=0$, passing to the limit as $m$ tends to $\infty$ in the above equation yields $\inf v\geq0$ (for any choice of $A>0$).

From now on we assume by contradiction that $l>0$, and without loss of generality that $|\nabla f(y_m)|>0$ for every $m\in\mathbb{N}$. Then, noting that $$\nabla|\nabla f|^2=A\nabla u - \nabla v$$ and using \eqref{eqft}, at $y_m$ we have
\begin{align*}
\frac{1}{n-1}\frac{\Delta f}{|\nabla f|^2}\langle\nabla|\nabla f|^2,\nabla f\rangle&=\frac{1}{n-1}\frac{\Delta f}{|\nabla f|^2}\left(A\langle\nabla u,\nabla f\rangle-\langle\nabla v,\nabla f\rangle\right)\\
&=\frac{1}{n-1}\,\frac{1}{|\nabla f|^2}\left(\frac{n-4}{4(n-1)}u+|\nabla f|^2\right)\left(-Au|\nabla f|^2-\langle\nabla v,\nabla f\rangle\right)\\
&\leq-\frac{A}{n-1}\left(\frac{n-4}{4(n-1)}u^2+u|\nabla f|^2\right)+o(1)
\end{align*}
as $m$ tends to $\infty$. Here we have used that $u$ is bounded, that $|\nabla f(y_m)|$ converges to $l>0$ and that $|\nabla v(y_m)|\leq o(1)$ as $m$ tends to $\infty$. Inserting the above inequality into \eqref{14}, we obtain that at $y_m$
\begin{align}
\label{16}\frac{1}{2} \Delta_f v &\leq -\frac{n-4}{16(n-1)^3}(2A(n^2-1)+n-4)u^2-\frac{n}{n-1}|\nabla f|^4\\
\nonumber  &\,\,\,\,\,\,\,+\frac{1}{4(n-1)^2}(2A(n-1)(n-3)+n^2-4n+9)u|\nabla f|^2+o(1)\\
\nonumber  &\,\,\,\,\,\,\,=\left(-\alpha\left(\frac{u}{|\nabla f|^2}\right)^2+\beta\frac{u}{|\nabla f|^2}-\gamma\right)|\nabla f|^4+o(1),
\end{align}
with $$\alpha=\frac{(n-4)(2A(n^2-1)+n-4)}{16(n-1)^3},\,\,\,\,\beta=\frac{(2A(n-1)(n-3)+n^2-4n+9)}{4(n-1)^2},\,\,\,\,\gamma=\frac{n}{n-1},$$
$\alpha,\beta,\gamma>0$ for $n>4$. Now note that
$$\Delta_1=\beta^2-4\alpha\gamma=\frac{1}{16(n-1)^3}(aA^2-bA+c),$$
with $$a=4(n-1)(n-3)^2,\,\,\,\,b=4(n^3+n^2-29n+27),\,\,\,\,c=n^3-11n^2+55n-81,$$
$a,b,c>0$ for $n>4$. Since for every $n>4$
$$\Delta_2=b^2-4ac=64n(n-1)(n-4)(n(5n-26)+9)>0$$
we can choose $A>0$ such that $\Delta_1<0$. In particular, any $A$ satisfying
\begin{equation}\label{15}
A\in \left(\frac{b-\sqrt{\Delta_2}}{2a},\frac{b+\sqrt{\Delta_2}}{2a}\right)
\end{equation}
will do, since $\frac{b-\sqrt{\Delta_2}}{2a}>0$. For every $A$ satisfying \eqref{15} there exist a constant $\lambda>0$ such that
$$-\alpha t^2+\beta t-\gamma\leq-\lambda<0\qquad\text{for every }t\in\mathbb{R}.$$
Hence from \eqref{17} and \eqref{16} we deduce that
$$-o(1)\leq \frac{1}{2} \Delta_f v (y_m)\leq-\lambda |\nabla f(y_m)|^4+o(1),$$
which leads to the contradiction $0\leq-\lambda l^4$ as $m$ tends to $\infty$, since we assumed $l>0$.

Thus for every $A$ satisfying \eqref{15} one has $l=0$ and hence $v\geq0$ on $M^n$, which implies \eqref{18} with $C=A$.
\end{proof}

\begin{rem}\label{remC}
  As it is clear from the proof of Lemma \ref{lemma5}, one has the gradient estimate \eqref{18} where $C$ can be chosen to be the infimum of the $A's$ for which the proof goes through, i.e.
  \begin{equation*}
  C=\frac{b-\sqrt{\Delta_2}}{2a}=\frac{4(n^3+n^2-29n+27)-\sqrt{64n(n-1)(n-4)(n(5n-26)+9)}}{8(n-1)(n-3)^2}
  \end{equation*}
\end{rem}

\

As a corollary of the gradient estimate \eqref{18}, we obtain the following decay estimate at infinity for $R$, which is the last technical result that we will need in the proof of Theorem \ref{t-hd}.

\begin{cor}\label{c-lower} Let $(M^{n}, g)$, $n>4$, be a complete, non-compact, Riemannian manifold satisfying~\eqref{eq} and~\eqref{eq1} with $R<0$ on $M^n$. Let $O\in M^n$. For every $x\in M^n$ we have
\begin{equation}\label{20}
R(x)\leq -\frac{c_1}{c_2+d_{g}(x,O)^2}
\end{equation}
for some positive constants $c_i=c_i(n,u(O))$, i=1,2.
\end{cor}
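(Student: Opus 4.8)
The plan is to turn the gradient estimate \eqref{18} into a genuinely \emph{bounded} gradient by a single well-chosen substitution, and then integrate. Set $u := -R > 0$, so that by Lemma \ref{lemma5} the estimate \eqref{18} reads $|\nabla u|^2 \leq C\,u^3$ on $M^n$, where $C = C(n)$ is the constant of that lemma. The key structural remark is that the exponent $-\tfrac12$ is exactly calibrated to absorb the cubic right-hand side. Writing $\phi := u^{-1/2}$, which is smooth and positive because $R<0$, one has $\nabla \phi = -\tfrac12 u^{-3/2}\nabla u$ and hence
\[
|\nabla \phi|^2 = \tfrac14\, u^{-3}\,|\nabla u|^2 \leq \frac{C}{4} \qquad \text{on } M^n,
\]
so that $|\nabla \phi| \leq \tfrac{\sqrt{C}}{2}$ everywhere.

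Next I would invoke completeness. By Hopf--Rinow any $x \in M^n$ can be joined to $O$ by a minimizing geodesic, and integrating $|\nabla \phi|$ along such a geodesic upgrades the pointwise bound to a global Lipschitz estimate,
\[
\phi(x) \leq \phi(O) + \frac{\sqrt{C}}{2}\, d_g(x,O),
\]
where $\phi(O) = u(O)^{-1/2}$. Squaring and using the elementary inequality $(a+b)^2 \leq 2a^2 + 2b^2$ then gives
\[
\phi(x)^2 \leq 2\,u(O)^{-1} + \frac{C}{2}\, d_g(x,O)^2.
\]

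Finally, since $u(x) = \phi(x)^{-2}$, inverting the previous line yields
\[
u(x) \geq \frac{1}{2\,u(O)^{-1} + \tfrac{C}{2}\, d_g(x,O)^2} = \frac{2/C}{\tfrac{4}{C\,u(O)} + d_g(x,O)^2},
\]
and recalling $R = -u$ this is precisely \eqref{20} with $c_1 = 2/C$ (depending only on $n$) and $c_2 = \tfrac{4}{C\,u(O)}$ (depending on $n$ and $u(O)$), both positive. I do not expect a genuine analytic obstacle here: the entire argument is driven by the observation that $u^{-1/2}$ is the power matching the scaling $|\nabla u|^2 \lesssim u^3$, after which only Hopf--Rinow and elementary algebra remain. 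The single point deserving a little care is the passage from the pointwise gradient bound to the global Lipschitz estimate, which uses completeness of $(M^n,g)$ to guarantee a minimizing geodesic from $O$ to an arbitrary point $x$.
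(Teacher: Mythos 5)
Your proposal is correct and follows essentially the same route as the paper: both pass to $\phi=u^{-1/2}$, observe that \eqref{18} makes $|\nabla\phi|$ bounded by a constant depending only on $n$, and integrate along a minimizing geodesic from $O$ before inverting. The only cosmetic difference is your use of $(a+b)^2\leq 2a^2+2b^2$ to put the bound in the exact form \eqref{20}, which the paper leaves implicit.
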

\begin{proof} Let $u=-R$. Inequality \eqref{18} is equivalent to
$$
|\nabla u^{-1/2}|\leq c
$$
for some positive constant $c=c(n)$. Integrating along a geodesic, we get
$$
u(x)^{-1/2}\leq u(O)^{-1/2}+c\, d_{g}(x,O),
$$
i.e.
$$
\frac{1}{u(x)}\leq \left(\frac{1}{u(O)^{1/2}}+c\, d_{g}(x,O)\right)^2,
$$
from which we immediately deduce \eqref{20}.
\end{proof}

\begin{proof}[Proof of Theorem \ref{t-hd} and Remark \ref{r-hd}] First of all, if $M^n$ is compact, then integrating \eqref{eq1} over $M^n$ we get $R\equiv 0$ on $M^n$.

\smallskip

On the other hand, suppose that $M^n$ is non-compact. By Corollary \ref{c-strong} either $(M^n,g)$ is scalar flat, or $R<0$ on $M^n$. We assume by contradiction that $R<0$ and we recall that $u=-R$ satisfies
$$
\Delta u = - \frac{n-4}{4(n-1)}u^2.
$$
Let $\eta$ be a smooth cutoff function such that $\eta\equiv1$ on $B_s(O)$, $\eta\equiv0$ on $B_{2s}^c(O)$, $0\leq\eta\leq1$ on $M^n$ and $|\nabla\eta|\leq\frac{c}{s}$ for every $s\gg1$ with $c>0$ independent of $s$.
Then, using \eqref{18} we get
\begin{align*}
\frac{n-4}{4(n-1)}\int_M u^q \eta^2\,dV_{g} &= -\int_M \Delta u\, u^{q-2}\eta^2\,dV_{g} \\
&= (q-2) \int_M |\nabla u|^2 u^{q-3}\eta^2\,dV_{g} +2\int_M u^{q-2}\langle \nabla u,\nabla \eta\rangle \eta\,dV_{g}\\
&\leq (q-2) C\int_M u^{q}\eta^2\,dV_{g} +\frac{c\sqrt{C}}{s}\int_{B_{2s}(O)\setminus B_s(O)}u^{q-\frac{1}{2}}\,dV_{g},
\end{align*}
with $C$ as in \eqref{18}. By Corollary \ref{c-lower}
\begin{equation}\label{11}
\frac{n-4}{4(n-1)}\int_M u^q \eta^2\,dV_{g}\leq(q-2)C \int_M u^{q}\eta^2\,dV_{g} +c\sqrt{C}\frac{(1+s^2)^\frac{1}{2}}{s}\int_{B_s^c(O)}u^{q}\,dV_{g}.
\end{equation}
Thus, if $u\in L^q(M^n)$, we obtain
\begin{equation*}
\left(\frac{n-4}{4(n-1)}-(q-2)C\right)\int_M u^q \eta^2\,dV_{g}\leq c\sqrt{C}\frac{(1+s^2)^\frac{1}{2}}{s}\int_{B_s^c(O)}u^{q}\,dV_{g}\longrightarrow 0\quad\text{as}\quad s\to +\infty.
\end{equation*}
This yields $u\equiv0$, if $$\frac{n-4}{4(n-1)}-(q-2)C>0,$$ i.e. if $$1<q<q^*=\frac{n-4}{4C(n-1)}+2$$ with $C$ as in \eqref{18}, a contradiction. Hence, the proofs of Theorem~\ref{t-hd} and Remark \ref{r-hd} are complete.

\end{proof}

\begin{rem}\label{remf}
Note that $q^*>2$, and that using Remark \ref{remC} we have
$$q^*=2+\frac{2(n-3)^2(n-4)}{4(n^3+n^2-29n+27)-\sqrt{64n(n-1)(n-4)(n(5n-26)+9)}}.$$
\end{rem}

\

\

\

\bibliographystyle{amsplain}
\bibliography{criticalFin}

\

\

\

\parindent=0pt

\end{document}